\documentclass[oneside,english,reqno]{amsart}
\usepackage[T1]{fontenc}
\usepackage[latin9]{inputenc}
\setcounter{secnumdepth}{2}
\setcounter{tocdepth}{2}
\usepackage{babel}
\usepackage{refstyle}
\usepackage{mathrsfs}
\usepackage{mathtools}
\usepackage{enumitem}
\usepackage{amstext}
\usepackage{amsthm}
\usepackage{amssymb}
\usepackage[all]{xy}
\PassOptionsToPackage{normalem}{ulem}
\usepackage{ulem}
\usepackage[unicode=true,pdfusetitle,
 bookmarks=true,bookmarksnumbered=false,bookmarksopen=false,
 breaklinks=false,pdfborder={0 0 1},backref=false,colorlinks=false]
 {hyperref}
\hypersetup{
 colorlinks=true,citecolor=blue,linkcolor=blue,linktocpage=true}

\makeatletter


\AtBeginDocument{\providecommand\secref[1]{\ref{sec:#1}}}
\AtBeginDocument{\providecommand\defref[1]{\ref{def:#1}}}
\AtBeginDocument{\providecommand\lemref[1]{\ref{lem:#1}}}
\AtBeginDocument{\providecommand\thmref[1]{\ref{thm:#1}}}
\AtBeginDocument{\providecommand\exaref[1]{\ref{exa:#1}}}
\RS@ifundefined{subsecref}
  {\newref{subsec}{name = \RSsectxt}}
  {}
\RS@ifundefined{thmref}
  {\def\RSthmtxt{theorem~}\newref{thm}{name = \RSthmtxt}}
  {}
\RS@ifundefined{lemref}
  {\def\RSlemtxt{lemma~}\newref{lem}{name = \RSlemtxt}}
  {}

\numberwithin{equation}{section}
\numberwithin{figure}{section}
\numberwithin{table}{section}
\theoremstyle{plain}
\newtheorem{thm}{\protect\theoremname}[section]
  \theoremstyle{definition}
  \newtheorem{defn}[thm]{\protect\definitionname}
  \theoremstyle{remark}
  \newtheorem{rem}[thm]{\protect\remarkname}
  \theoremstyle{plain}
  \newtheorem{lem}[thm]{\protect\lemmaname}
  \theoremstyle{plain}
  \newtheorem{cor}[thm]{\protect\corollaryname}
  \theoremstyle{plain}
  \newtheorem{prop}[thm]{\protect\propositionname}
  \theoremstyle{definition}
  \newtheorem{example}[thm]{\protect\examplename}
  \theoremstyle{remark}
  \newtheorem*{acknowledgement*}{\protect\acknowledgementname}

\allowdisplaybreaks
\usepackage{needspace}

\newref{lem}{refcmd={Lemma \ref{#1}}}
\newref{thm}{refcmd={Theorem \ref{#1}}}
\newref{cor}{refcmd={Corollary \ref{#1}}}
\newref{sec}{refcmd={Section \ref{#1}}}
\newref{sub}{refcmd={Section \ref{#1}}}
\newref{subsec}{refcmd={Section \ref{#1}}}
\newref{chap}{refcmd={Chapter \ref{#1}}}
\newref{prop}{refcmd={Proposition \ref{#1}}}
\newref{exa}{refcmd={Example \ref{#1}}}
\newref{tab}{refcmd={Table \ref{#1}}}
\newref{rem}{refcmd={Remark \ref{#1}}}
\newref{def}{refcmd={Definition \ref{#1}}}
\newref{fig}{refcmd={Figure \ref{#1}}}

\setlist[enumerate]{itemsep=5pt,topsep=3pt}
\setlist[enumerate,1]{label=(\roman*),ref=\roman*}
\setlist[enumerate,2]{label=(\alph*),ref=\theenumi \alph*}

\AtBeginDocument{
  
}

\makeatother

  \providecommand{\acknowledgementname}{Acknowledgement}
  \providecommand{\corollaryname}{Corollary}
  \providecommand{\definitionname}{Definition}
  \providecommand{\examplename}{Example}
  \providecommand{\lemmaname}{Lemma}
  \providecommand{\propositionname}{Proposition}
  \providecommand{\remarkname}{Remark}
\providecommand{\theoremname}{Theorem}

\begin{document}

\title[Metric duality: positive definite kernels vs boundary processes]{Metric duality between positive definite kernels and boundary processes}

\author{Palle Jorgensen and Feng Tian}

\address{(Palle E.T. Jorgensen) Department of Mathematics, The University
of Iowa, Iowa City, IA 52242-1419, U.S.A. }

\email{palle-jorgensen@uiowa.edu}

\urladdr{http://www.math.uiowa.edu/\textasciitilde{}jorgen/}

\address{(Feng Tian) Department of Mathematics, Hampton University, Hampton,
VA 23668, U.S.A.}

\email{feng.tian@hamptonu.edu}

\subjclass[2000]{Primary 47L60, 46N30, 46N50, 42C15, 65R10, 31C20, 62D05, 94A20, 39A12;
Secondary 46N20, 22E70, 31A15, 58J65}

\keywords{Hilbert space, reproducing kernel Hilbert space, harmonic analysis,
Gaussian free fields, transforms, covariance.}
\begin{abstract}
We study representations of positive definite kernels $K$ in a general
setting, but with view to applications to harmonic analysis, to metric
geometry, and to realizations of certain stochastic processes. Our
initial results are stated for the most general given positive definite
kernel, but are then subsequently specialized to the above mentioned
applications. Given a positive definite kernel $K$ on $S\times S$
where $S$ is a fixed set, we first study families of factorizations
of $K$. By a factorization (or representation) we mean a probability
space $\left(B,\mu\right)$ and an associated stochastic process indexed
by $S$ which has $K$ as its covariance kernel. For each realization
we identify a co-isometric transform from $L^{2}\left(\mu\right)$
onto $\mathscr{H}\left(K\right)$, where $\mathscr{H}\left(K\right)$
denotes the reproducing kernel Hilbert space of $K$. In some cases,
this entails a certain renormalization of $K$. Our emphasis is on
such realizations which are minimal in a sense we make precise. By
minimal we mean roughly that $B$ may be realized as a certain $K$-boundary
of the given set $S$. We prove existence of minimal realizations
in a general setting. 
\end{abstract}

\maketitle
\tableofcontents{}

\section{\label{sec:Intro}Introduction}

A variety of notions of \textquotedblleft boundary\textquotedblright{}
and boundary representation for general classes of positive definite
kernels are established in \cite{zbMATH06664785}. It allows us to
carry over results and notions from classical harmonic analysis on
the disk to this wider context (see \cite{MR1655831,MR1667822,MR1655832}).

More specifically, starting with a given positive definite (p.d.)
kernel $K$ on $S\times S$, we introduce generalized boundaries for
the set $S$ that carry $K$. It is a measure theoretic ``boundary\textquotedblright{}
in the form of a probability space, but it is not unique. The set
of measure boundaries will be denoted $\mathcal{M}\left(K\right)$.
Indeed, there exists such a generalized boundary probability space
associated to any p.d. kernel. For example, as an element in $\mathcal{M}\left(K\right)$,
we can take a \textquotedblleft measure\textquotedblright{} boundary
to be the Gaussian process having $K$ as its covariance kernel. This
exists by Kolmogorov's consistency theorem. 

The p.d. kernels include those defined on infinite discrete sets,
for example sets of vertices in electrical networks, or discrete sets
which arise from sampling operations performed on p.d. kernels in
a continuous setting, and with the sampling then referring to suitable
discrete subsets. See, e.g., \cite{MR2884231,MR3285408,MR3541255}.

The purpose of the present paper is to study a metric duality between
(\ref{enu:as1}) and (\ref{enu:as2}) below, where 
\begin{enumerate}[label=(\Roman{enumi}),ref=\Roman{enumi}]
\item \label{enu:as1}$K:S\times S\longrightarrow\mathbb{C}$ is a given
positive definite (p.d.) kernel defined on a fixed set $S$, i.e.,
for $\forall N\in\mathbb{N}$, $\forall\left\{ s_{i}\right\} _{i=1}^{N}$,
$s_{i}\in S$, $\forall\left\{ \xi_{i}\right\} _{i=1}^{N}$, $\xi_{i}\in\mathbb{C}$,
we have 
\begin{equation}
\sum_{i}\sum_{j}\xi_{i}\overline{\xi}_{j}K\left(s_{i},s_{j}\right)\geq0;\;\text{and}\label{eq:a1}
\end{equation}
\item \label{enu:as2}measure space $\left(B,\mathscr{B},\mu\right)$ where
$B$ is a set equipped with a $\sigma$-algebra $\mathscr{B}$ of
subsets, and $\mu$ is a probability measure defined on $\mathscr{B}$. 

In particular, $\mu$ satisfies $\mu\left(\emptyset\right)=0$, $\mu\left(B\right)=1$,
$\mu\left(F\right)\geq0$ $\forall F\in\mathscr{B}$, and if $\left\{ F_{i}\right\} _{i\in\mathbb{N}}\subset\mathscr{B}$,
$F_{i}\cap F_{j}=\emptyset$, $i\neq j$ in $\mathbb{N}$, then $\mu\left(\cup_{i}F_{i}\right)=\sum_{i}\mu\left(F_{i}\right)$. 
\end{enumerate}
\begin{defn}
\label{def:a1}Let $K$ be a p.d. kernel as in (\ref{enu:as1}). We
shall denote by $\mathscr{H}\left(K\right)$ the corresponding \emph{reproducing
kernel Hilbert space} (RKHS), i.e., $\mathscr{H}\left(K\right)$ is
the Hilbert-completion of $span\left\{ K_{s}:=K\left(\cdot,s\right)\mathrel{;}s\in S\right\} $,
with respect to the inner product 
\begin{equation}
\left\langle \sum\xi_{i}K_{s_{i}},\sum\xi_{j}K_{s_{j}}\right\rangle _{\mathscr{H}\left(K\right)}:=\sum\sum\xi_{s_{i}}\overline{\xi_{s_{j}}}K\left(s_{i},s_{j}\right).\label{eq:pd3}
\end{equation}
The following reproducing property holds: 
\begin{equation}
f\left(s\right)=\left\langle f,K\left(\cdot,s\right)\right\rangle _{\mathscr{H}\left(K\right)},\;\forall s\in S,\;\forall f\in\mathscr{H}\left(K\right).\label{eq:rp1}
\end{equation}
\end{defn}

\begin{defn}
\label{def:dual}Given $K$ as in (\ref{enu:as1}), and $\left(B,\mu\right)$
as in (\ref{enu:as2}), we shall say that $\left(B,\mu\right)\in\mathcal{M}\left(K\right)$
if there is a function $k:S\longrightarrow L^{2}\left(\mu\right)$
such that 
\begin{equation}
K\left(s,t\right)=\int_{B}k_{s}\left(x\right)\overline{k_{t}\left(x\right)}d\mu\left(x\right)\label{eq:a2}
\end{equation}
holds for all $\left(s,t\right)\in S\times S$. We shall say that
$\left(B,\mu\right)$ is \emph{tight} (or minimal) iff the span of
$\left\{ k_{s}\mathrel{;}s\in S\right\} $ is dense in $L^{2}\left(B,\mu\right)$. 

Similarly, given $\left(B,\mu\right)$ as in (\ref{enu:as2}), we
shall say that a p.d. kernel $K$ is in $\mathscr{K}\left(\mu\right)$
if there is a stochastic process $\left\{ k_{s}\right\} _{s\in S}$
satisfying (\ref{eq:a2}).
\end{defn}

\begin{rem}
In \cite{2015arXiv150202549J}, we showed that for all p.d. kernel
$K\left(s,t\right)$, $\left(s,t\right)\in S\times S$, we have $\mathcal{M}\left(K\right)\neq\emptyset$.
See more examples below.
\end{rem}

Given $K$ as in (\ref{enu:as1}) then the problem (\ref{eq:a2})
always has a solution in a discrete (atomic) measure space relative
to the counting measure. Nonetheless, in the interesting solutions
$\left(B,\mathscr{B},\mu\right)$ to (\ref{eq:a2}) we aim to achieve
$B$ as a ``boundary space'' to the given set $S$ from (\ref{enu:as1});
see the details in \secref{HA} below.
\begin{defn}
We shall say that a Hilbert space $\mathscr{H}$ is \emph{separable}
if there is an orthonormal basis (ONB) $\left\{ \beta_{n}\right\} _{n\in\mathbb{N}}$
indexed by $\mathbb{N}$ (or a set of cardinality $\aleph_{0}$),
i.e., we have 
\begin{align}
\left\langle \beta_{n},\beta_{m}\right\rangle _{\mathscr{H}} & =\delta_{n,m},\;\text{and}\label{eq:a3}\\
\left\Vert f\right\Vert _{\mathscr{H}}^{2} & =\sum_{n\in\mathbb{N}}\left|\left\langle f,\beta_{n}\right\rangle _{\mathscr{H}}\right|^{2},\;\forall f\in\mathscr{H}.\label{eq:a4}
\end{align}

If only (\ref{eq:a4}) holds, we say that $\left\{ \beta_{n}\right\} _{n\in\mathbb{N}}$
is a \emph{Parseval frame}. In both cases, vectors $f$ in $\mathscr{H}$
always have the representation 
\begin{equation}
f=\sum_{n\in\mathbb{N}}\left\langle f,\beta_{n}\right\rangle _{\mathscr{H}}\beta_{n}\label{eq:a5}
\end{equation}
where (\ref{eq:a5}) converges in the norm $\left\Vert \cdot\right\Vert _{\mathscr{H}}$
of $\mathscr{H}$. 
\end{defn}

\begin{lem}
\label{lem:a1}Let $K$ be given and assumed positive definite (p.d.)
on $S\times S$, where $S$ is a set, see (\ref{enu:as1}). Let $\mathscr{H}=\mathscr{H}\left(K\right)$
be the corresponding reproducing kernel Hilbert space (RKHS), assumed
separable; see \defref{a1}. Let $\left\{ \beta_{n}\right\} _{n\in\mathbb{N}}$
be a Parseval frame, and set 
\begin{equation}
k_{s}\left(n\right):=\beta_{n}\left(s\right)=\left\langle \beta_{n},K\left(\cdot,s\right)\right\rangle _{\mathscr{H}};\label{eq:a6}
\end{equation}
see (\ref{eq:rp1}). Then the system (\ref{eq:a6}) is a solution
to (\ref{eq:a2}), but with the measure space $\mathbb{N}$, and with
counting measure.
\end{lem}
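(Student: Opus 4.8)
The plan is to verify directly that the proposed system $k_s(n) := \beta_n(s)$ satisfies the factorization identity \eqref{eq:a2} when $B = \mathbb{N}$ is equipped with the counting measure. Under this identification, $L^2(\mu)$ is just $\ell^2(\mathbb{N})$, and the integral in \eqref{eq:a2} becomes the sum $\sum_{n} k_s(n)\overline{k_t(n)}$. So the entire content of the lemma reduces to establishing the single identity

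\begin{equation}
K(s,t) = \sum_{n\in\mathbb{N}} \beta_n(s)\,\overline{\beta_n(t)}, \qquad \forall (s,t)\in S\times S.\label{eq:planmain}
\end{equation}

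\textbf{Key steps.} First I would rewrite the left-hand side using the reproducing property: by \eqref{eq:rp1} applied to $f = K_t = K(\cdot,t)$, we have $K(s,t) = K_t(s) = \langle K_t, K_s\rangle_{\mathscr{H}}$ (taking a little care with the conjugation convention in \eqref{eq:pd3}). Next I would expand this inner product using the Parseval frame expansion \eqref{eq:a4}, which guarantees $\|f\|^2_{\mathscr{H}} = \sum_n |\langle f,\beta_n\rangle|^2$ for every $f$; by polarization this upgrades to the reconstruction formula $\langle f,g\rangle_{\mathscr{H}} = \sum_n \langle f,\beta_n\rangle\,\overline{\langle g,\beta_n\rangle}$. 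Applying this with $f = K_t$ and $g = K_s$ yields $K(s,t) = \sum_n \langle K_t,\beta_n\rangle\,\overline{\langle K_s,\beta_n\rangle}$. Finally I would recognize each factor through \eqref{eq:a6}: since $k_s(n) = \langle \beta_n, K_s\rangle_{\mathscr{H}}$, its complex conjugate is $\overline{k_s(n)} = \langle K_s,\beta_n\rangle$, and likewise $k_t(n) = \langle \beta_n, K_t\rangle = \overline{\langle K_t,\beta_n\rangle}$. Matching these against the expanded sum produces exactly \eqref{eq:planmain}, completing the verification.

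\textbf{The main obstacle} is not depth but bookkeeping: keeping the Hermitian conjugation conventions consistent between the inner-product definition \eqref{eq:pd3}, the reproducing property \eqref{eq:rp1}, and the frame identity \eqref{eq:a4}, so that the conjugate lands on the correct factor and the resulting sum matches \eqref{eq:a2} rather than its complex conjugate. A secondary point requiring a word of justification is the passage from the norm identity \eqref{eq:a4} to the polarized bilinear reconstruction formula; this is the standard polarization argument for Parseval frames and needs only that \eqref{eq:a4} holds for all $f\in\mathscr{H}$, which is exactly the hypothesis. The separability assumption enters only to ensure the index set $\mathbb{N}$ (countable) suffices, so that $B=\mathbb{N}$ with counting measure is a legitimate $\sigma$-finite measure space and the interchange of summation is unproblematic. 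No convergence difficulty arises beyond the convergence already built into \eqref{eq:a4}.
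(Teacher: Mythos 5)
Your proposal is correct and follows essentially the same route as the paper's proof: express $K(s,t)$ as $\left\langle K_t,K_s\right\rangle_{\mathscr{H}}$ via the reproducing property, expand that inner product over the Parseval frame, and identify each term with $k_s(n)\overline{k_t(n)}$ using (\ref{eq:a6}). Your explicit mention of the polarization step (upgrading the norm identity (\ref{eq:a4}) to the bilinear reconstruction formula) and your care with conjugation conventions are welcome refinements, but they do not change the substance of the argument.
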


\begin{proof}
The existence of a Parseval frame $\left\{ \beta_{n}\right\} _{n\in\mathbb{N}}$
is assumed, so (\ref{eq:a3})-(\ref{eq:a4}) hold for the Hilbert
space $\mathscr{H}:=\mathscr{H}\left(K\right)$. Now, for all pairs
$\left(s,t\right)\in S\times S$, we have 
\begin{eqnarray*}
K\left(s,t\right) & = & \left\langle K\left(\cdot,s\right),K\left(\cdot,t\right)\right\rangle _{\mathscr{H}}\quad\left(\text{by the repoducing property}\right)\\
 & \underset{\text{by \ensuremath{\left(\ref{eq:pd3}\right)}}}{=} & \sum_{n\in\mathbb{N}}\left\langle K\left(\cdot,s\right),\beta_{n}\right\rangle _{\mathscr{H}}\left\langle \beta_{n},K\left(\cdot,t\right)\right\rangle _{\mathscr{H}}\\
 & \underset{\text{by \ensuremath{\left(\ref{eq:rp1}\right)}}}{=} & \sum_{n\in\mathbb{N}}\beta_{n}\left(s\right)\overline{\beta_{n}\left(t\right)}\\
 & \underset{\text{by \ensuremath{\left(\ref{eq:a6}\right)}}}{=} & \sum_{n\in\mathbb{N}}k_{s}\left(n\right)\overline{k_{t}\left(n\right)}
\end{eqnarray*}
which is the desired conclusion. 
\end{proof}
\begin{cor}
Let $K$ be given as in \lemref{a1} above, and let $\left\{ \beta_{n}\right\} _{n\in\mathbb{N}}$
be a Parseval frame, set $k_{s}\left(n\right):=\beta_{n}\left(s\right)$,
see (\ref{eq:a6}); then this is a \uline{minimal solution}, i.e.,
$\left\{ k_{s}\left(\cdot\right)\right\} $ is dense in $l^{2}\left(\mathbb{N}\right)$. 
\end{cor}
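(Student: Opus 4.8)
The plan is to prove density through the standard orthogonal-complement criterion: a subspace of $\ell^{2}\left(\mathbb{N}\right)$ is dense if and only if its orthogonal complement is $\left\{ 0\right\} $. So I would fix $c=\left(c_{n}\right)_{n\in\mathbb{N}}\in\ell^{2}\left(\mathbb{N}\right)$ with $\left\langle c,k_{s}\right\rangle _{\ell^{2}}=0$ for every $s\in S$ and aim to force $c=0$.

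First I would record the two facts that make the reduction possible. Each $k_{s}$ genuinely lies in $\ell^{2}\left(\mathbb{N}\right)$: applying the Parseval identity (\ref{eq:a4}) to $f=K\left(\cdot,s\right)$ gives $\sum_{n}\left|k_{s}\left(n\right)\right|^{2}=\sum_{n}\left|\left\langle \beta_{n},K\left(\cdot,s\right)\right\rangle _{\mathscr{H}}\right|^{2}=\left\Vert K\left(\cdot,s\right)\right\Vert _{\mathscr{H}}^{2}=K\left(s,s\right)<\infty$. Moreover, by \lemref{a1} the Gram relation $\left\langle k_{s},k_{t}\right\rangle _{\ell^{2}}=\sum_{n}k_{s}\left(n\right)\overline{k_{t}\left(n\right)}=K\left(s,t\right)=\left\langle K\left(\cdot,s\right),K\left(\cdot,t\right)\right\rangle _{\mathscr{H}}$ holds, so the assignment $K\left(\cdot,s\right)\mapsto k_{s}$ extends by linearity and continuity to an isometry $V:\mathscr{H}\left(K\right)\to\ell^{2}\left(\mathbb{N}\right)$ whose range is exactly $\overline{\operatorname{span}}\left\{ k_{s}:s\in S\right\} $. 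In this language the corollary is the assertion that $V$ is onto, i.e. a unitary.

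Next I would translate the orthogonality hypothesis back into $\mathscr{H}\left(K\right)$ by means of the synthesis operator of the Parseval frame. Because $\left\{ \beta_{n}\right\} $ is Parseval, the map $c\mapsto\sum_{n}\overline{c_{n}}\,\beta_{n}$ is bounded into $\mathscr{H}\left(K\right)$, so $g:=\sum_{n}\overline{c_{n}}\,\beta_{n}$ is a well-defined vector. The reproducing property (\ref{eq:rp1}) then yields, for each $s\in S$,
\begin{equation*}
g\left(s\right)=\left\langle g,K\left(\cdot,s\right)\right\rangle _{\mathscr{H}}=\sum_{n}\overline{c_{n}}\,\beta_{n}\left(s\right)=\overline{\left\langle c,k_{s}\right\rangle _{\ell^{2}}}=0 .
\end{equation*}
Hence $g$ vanishes as a function on $S$, which is to say $g=0$ in $\mathscr{H}\left(K\right)$.

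The main obstacle is the last step: passing from $\sum_{n}\overline{c_{n}}\,\beta_{n}=0$ to $c=0$. This is exactly the injectivity of the synthesis operator, equivalently the surjectivity of the canonical isometry $V$, equivalently the statement that the analysis map $f\mapsto\left(\langle f,\beta_{n}\rangle\right)_{n}$ sends $\mathscr{H}\left(K\right)$ \emph{onto} $\ell^{2}\left(\mathbb{N}\right)$; all of the content of the corollary sits here. When $\left\{ \beta_{n}\right\} $ is an orthonormal basis the step is immediate, since then $\langle g,\beta_{m}\rangle=\overline{c_{m}}$ reads off each coordinate directly from $g=0$. For a general Parseval frame I would try to recover the coordinates by pairing $g=0$ against the frame vectors and invoking the reconstruction formula (\ref{eq:a5}); verifying that this extraction loses no information about $c$ against the redundancy of the frame is the point that requires the most care, and it is where I expect the proof to stand or fall.
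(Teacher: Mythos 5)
You have put your finger on exactly the right spot, and your suspicion resolves negatively: the step from $\sum_{n}\overline{c_{n}}\beta_{n}=0$ to $c=0$ is not merely delicate, it is \emph{false} for a genuinely redundant Parseval frame, and with it the corollary as literally stated (density in $\ell^{2}\left(\mathbb{N}\right)$) fails. For a Parseval frame the analysis operator $T:\mathscr{H}\left(K\right)\rightarrow\ell^{2}\left(\mathbb{N}\right)$, $Tf=\left(\left\langle f,\beta_{n}\right\rangle _{\mathscr{H}}\right)_{n}$, is an isometry with closed range; the synthesis map you form is (up to entrywise conjugation) its adjoint $T^{*}$, and $\ker T^{*}=\left(\operatorname{ran}T\right)^{\perp}$, which is nonzero unless $T$ is onto. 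Since $\overline{k_{s}\left(n\right)}=\left\langle K\left(\cdot,s\right),\beta_{n}\right\rangle _{\mathscr{H}}=\left(TK\left(\cdot,s\right)\right)\left(n\right)$, the closed span of $\left\{ k_{s}\right\} $ is precisely the entrywise conjugate of $\operatorname{ran}T$; it equals all of $\ell^{2}\left(\mathbb{N}\right)$ if and only if $T$ is onto, i.e., if and only if $\left\{ \beta_{n}\right\} $ is an orthonormal basis. Concretely: let $\left\{ e_{j}\right\} _{j\in\mathbb{N}}$ be an ONB of $\mathscr{H}\left(K\right)$ and take the Parseval frame $\beta_{2j-1}=\beta_{2j}=e_{j}/\sqrt{2}$; then $k_{s}\left(2j-1\right)=k_{s}\left(2j\right)$ for every $s\in S$, so $c=\left(1,-1,0,0,\dots\right)$ is a nonzero vector orthogonal to every $k_{s}$, even though the factorization identity of \lemref{a1} still holds. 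So no amount of care recovers $c$ from $g=0$; the redundancy of the frame is exactly the obstruction. (A separate minor slip: since $\left\langle k_{s},k_{t}\right\rangle _{\ell^{2}}=K\left(s,t\right)$ while $\left\langle K\left(\cdot,s\right),K\left(\cdot,t\right)\right\rangle _{\mathscr{H}}=K\left(t,s\right)$ by (\ref{eq:rp1}), the assignment $K\left(\cdot,s\right)\mapsto k_{s}$ extends isometrically only antilinearly; the linear isometry is $K\left(\cdot,s\right)\mapsto\overline{k_{s}}$. This does not affect the density question.)

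For comparison, the paper's own proof contains the same hole, hidden rather than flagged: it takes $f\in\mathscr{H}\left(K\right)$ whose coordinate sequence is orthogonal to all $k_{s}$, deduces $\left\langle f,\beta_{n}\right\rangle =0$ for all $n$, and concludes $\left\Vert f\right\Vert _{\mathscr{H}}=0$ from (\ref{eq:a4}). This tests $\left\{ k_{s}\right\} ^{\perp}$ only against sequences of the form $Tf$, i.e., against $\operatorname{ran}T$; and since the closed span of the $k_{s}$ is (the conjugate of) $\operatorname{ran}T$ itself, the computation establishes only $\operatorname{ran}T\cap\left(\operatorname{ran}T\right)^{\perp}=\left\{ 0\right\} $, which is vacuous. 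Density requires testing against all of $\ell^{2}\left(\mathbb{N}\right)$, which is exactly the surjectivity your last step demands. So read as a proof, your proposal is incomplete at the very point you identify; but read as a diagnosis, it is more accurate than the paper's argument: the corollary is correct precisely in the ONB case, where your coordinate-reading observation $\left\langle g,\beta_{m}\right\rangle =\overline{c_{m}}$ finishes the proof at once, while for a general Parseval frame the honest conclusion is the weaker statement that $\overline{\operatorname{span}}\left\{ k_{s}\right\} $ coincides with the (conjugated) range of the analysis operator.
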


\begin{proof}
Immediate from the details in the proof of \lemref{a1}. In particular,
if $f\in\mathscr{H}=\mathscr{H}\left(K\right)$ satisfies $f\perp k_{s}\left(\cdot\right)$,
then $\left\langle f,\beta_{n}\left(\cdot\right)\right\rangle =0$
for all $n\in\mathbb{N}$, so by (\ref{eq:a4}) we get 
\[
\left\Vert f\right\Vert _{\mathscr{H}}^{2}=\sum_{n\in\mathbb{N}}\left|\left\langle f,\beta_{n}\right\rangle _{\mathscr{H}}\right|^{2}=0.
\]
\end{proof}
\textbf{Discussion of the literature.} The theory of RKHS and their
applications is vast, and below we only make a selection. Readers
will be able to find more cited there. As for the general theory of
RKHS in the pointwise category, we find useful \cite{AD92,ABDdS93,AD93,MR2529882,MR3526117}.
The applications include fractals (see e.g., \cite{MR0008639,AJSV13,MR3267010});
probability theory \cite{MR0277027,MR3379106,MR3504608,MR3624688,MR3571702,MR3601658,MR3622604};
and application to learning theory \cite{MR2058288,MR2558684,MR2488871,MR3450534}.
For recent applications, we refer to \cite{MR3450534,MR3441738,MR3559001}. 

\section{Properties of solutions to the factorization problem}

Let $K:S\times S\longrightarrow\mathbb{C}$ be a given p.d. kernel,
as specified in (\ref{enu:as1}) from \secref{Intro} above. Solutions
$\left(B,\mu,\left\{ k_{s}\right\} _{s\in S}\right)$ to the problem
(\ref{eq:a2}) are called \emph{factorizations}.
\begin{prop}
Let $K$ on $S\times S$ be given, and let $\left(B,\mu,\left\{ k_{s}\right\} _{s\in S}\right)$
be a solution to the factorization problem (\ref{eq:a2}). Then the
assignment 
\begin{equation}
W\left(K\left(\cdot,s\right)\right):=k_{s}\in L^{2}\left(\mu\right)\label{eq:e1}
\end{equation}
extends by linearity to an isometry, denoted $W:\mathscr{H}\left(K\right)\longrightarrow L^{2}\left(\mu\right)$,
and its adjoint $V:=W^{*}:L^{2}\left(\mu\right)\longrightarrow\mathscr{H}\left(K\right)$
is the following transform
\begin{equation}
\left(Vf\right)\left(s\right)=\int_{B}f\left(x\right)\overline{k_{s}\left(x\right)}d\mu\left(x\right),\label{eq:e2}
\end{equation}
and we have
\begin{equation}
W^{*}W=VW=I_{\mathscr{H}\left(K\right)},\label{eq:e3}
\end{equation}
while $WW^{*}=WV$ is a projection in the Hilbert space $L^{2}\left(\mu\right)$.
\end{prop}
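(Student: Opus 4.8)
The plan is to verify the defining equation for $W$ is consistent, establish that $W$ is an isometry on the dense domain $\operatorname{span}\{K(\cdot,s)\}$, and then compute its adjoint directly. First I would show that the assignment (\ref{eq:e1}) is well-defined and isometric on the kernel vectors. The key computation is that for $s,t\in S$,
\[
\left\langle W K(\cdot,s), W K(\cdot,t)\right\rangle_{L^2(\mu)} = \left\langle k_s, k_t\right\rangle_{L^2(\mu)} = \int_B k_s(x)\overline{k_t(x)}\,d\mu(x) = K(s,t),
\]
where the last equality is exactly the factorization hypothesis (\ref{eq:a2}). On the other side, by the reproducing property and (\ref{eq:pd3}),
\[
\left\langle K(\cdot,s), K(\cdot,t)\right\rangle_{\mathscr{H}(K)} = K(s,t).
\]
Since these agree on all generating pairs, $W$ preserves inner products on $\operatorname{span}\{K(\cdot,s)\}$, hence is well-defined (a zero vector in $\mathscr{H}(K)$ maps to a vector of zero norm in $L^2(\mu)$) and extends by continuity to a global isometry $W:\mathscr{H}(K)\to L^2(\mu)$.

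Next I would identify the adjoint $V=W^*$. The clean way is to compute $\left\langle Vf, K(\cdot,s)\right\rangle_{\mathscr{H}(K)}$ for $f\in L^2(\mu)$ and use the reproducing property (\ref{eq:rp1}) to read off the pointwise values of $Vf$. By the definition of the adjoint and (\ref{eq:e1}),
\[
(Vf)(s) = \left\langle Vf, K(\cdot,s)\right\rangle_{\mathscr{H}(K)} = \left\langle f, W K(\cdot,s)\right\rangle_{L^2(\mu)} = \left\langle f, k_s\right\rangle_{L^2(\mu)} = \int_B f(x)\overline{k_s(x)}\,d\mu(x),
\]
which is precisely formula (\ref{eq:e2}). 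This simultaneously confirms that $Vf$ really does land in $\mathscr{H}(K)$, since $V=W^*$ maps into $\mathscr{H}(K)$ by construction of the adjoint.

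Finally, the two operator identities in (\ref{eq:e3}) follow from general Hilbert-space facts about isometries together with the two previous steps. Because $W$ is an isometry, $\|Wf\|_{L^2(\mu)} = \|f\|_{\mathscr{H}(K)}$ for all $f$, and polarization gives $\langle W^*W f, g\rangle = \langle Wf, Wg\rangle = \langle f,g\rangle$, so $W^*W = VW = I_{\mathscr{H}(K)}$. Consequently $P:=WW^* = WV$ satisfies $P^2 = W(W^*W)W^* = W W^* = P$ and $P^* = (WW^*)^* = WW^* = P$, so $P$ is a self-adjoint idempotent, i.e., a projection in $L^2(\mu)$. I expect the only genuine subtlety — the main obstacle — to be the well-definedness of $W$: one must confirm that the map respects linear relations among the kernel vectors, equivalently that a finite combination $\sum_i \xi_i K(\cdot,s_i)$ that vanishes in $\mathscr{H}(K)$ is sent to $\sum_i \xi_i k_{s_i}$ vanishing in $L^2(\mu)$, which again reduces to the norm identity established by (\ref{eq:a2}) and (\ref{eq:pd3}). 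Everything else is routine.
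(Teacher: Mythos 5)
Your proposal is correct, and its skeleton matches the paper's: define $W$ on the kernel functions, get the isometry from the factorization identity (\ref{eq:a2}) together with (\ref{eq:pd3}), and read off the adjoint's pointwise values from the reproducing property (\ref{eq:rp1}). The one genuine divergence is in how membership $Vf\in\mathscr{H}\left(K\right)$ is justified. You get it for free from abstract operator theory: $W$ is bounded, hence $W^{*}$ exists as a bounded operator into $\mathscr{H}\left(K\right)$, and the computation $\left(W^{*}f\right)\left(s\right)=\left\langle W^{*}f,K\left(\cdot,s\right)\right\rangle _{\mathscr{H}\left(K\right)}=\left\langle f,k_{s}\right\rangle _{L^{2}\left(\mu\right)}$ then identifies it with the integral transform. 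The paper instead works in the opposite direction: it takes the integral formula (\ref{eq:e2}) as the object of interest and verifies directly, via the Cauchy--Schwarz estimate (\ref{eq:e5}), that the function $s\longmapsto\int_{B}f\left(x\right)\overline{k_{s}\left(x\right)}d\mu\left(x\right)$ satisfies the standard RKHS membership criterion
\[
\Big|\sum_{i}\xi_{i}\left(Vf\right)\left(s_{i}\right)\Big|^{2}\leq\left\Vert f\right\Vert _{L^{2}\left(\mu\right)}^{2}\sum_{i}\sum_{j}\xi_{i}\overline{\xi}_{j}K\left(s_{i},s_{j}\right),
\]
which occupies most of its proof. Your route is shorter and equally rigorous; the paper's route is self-contained (no appeal to the abstract adjoint) and shows intrinsically that the transform defines an element of the RKHS with a contractive bound, which is the formulation one wants if $V$ is to be introduced by its integral formula independently of $W$. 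You also spell out the verification of (\ref{eq:e3}) (polarization for $W^{*}W=I$, and idempotence plus self-adjointness for $WW^{*}$), which the paper leaves implicit as a standard fact about isometries; that is a harmless addition, not a gap.
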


\begin{proof}
It is immediate from (\ref{eq:a2}) that the operator $W$ from (\ref{eq:e1})
is isometric $\mathscr{H}\left(K\right)\longrightarrow L^{2}\left(\mu\right)$.
For $f\in L^{2}\left(\mu\right)$, and $s\in S$, we have
\begin{align*}
\left\langle W\left(K\left(\cdot,s\right)\right),f\right\rangle _{L^{2}\left(\mu\right)} & =\int_{B}k_{s}\left(x\right)\overline{f\left(x\right)}d\mu\left(x\right)\\
 & =\left\langle K\left(\cdot,s\right),W^{*}f\right\rangle _{\mathscr{H}\left(K\right)}=\overline{\left(W^{*}f\right)\left(s\right)},
\end{align*}
so the formula (\ref{eq:e2}) follows, and we infer that 
\begin{equation}
\Big(s\longmapsto\left(Vf\right)\left(s\right)=\int_{B}f\left(x\right)\overline{k_{s}\left(x\right)}d\mu\left(x\right)\Big)\in\mathscr{H}\left(K\right).\label{eq:e4}
\end{equation}

We will show that for $\forall N\in\mathbb{N}$, $\forall\left\{ s_{i}\right\} _{i=1}^{N}$,
$s_{i}\in S$, $\left\{ \xi_{i}\right\} _{i=1}^{N}$, $\xi_{i}\in\mathbb{C}$,
we have
\begin{equation}
\left|\sum_{i=1}^{N}\xi_{i}\left(Vf\right)\left(s_{i}\right)\right|^{2}\leq\left\Vert f\right\Vert _{L^{2}\left(\mu\right)}^{2}\sum_{i=1}^{N}\sum_{j=1}^{N}\xi_{i}\overline{\xi}_{j}K\left(s_{i},s_{j}\right),\label{eq:e5}
\end{equation}
and the desired conclusion (\ref{eq:e4}) follows. We now show that
(\ref{eq:e5}) holds: 
\begin{align*}
\text{LHS}_{\left(\ref{eq:e5}\right)} & =\left|\int_{B}f\left(x\right)\overline{\sum_{i}\xi_{i}k_{s_{i}}\left(x\right)}d\mu\left(x\right)\right|^{2}\\
 & \leq\int_{B}\left|f\left(x\right)\right|^{2}d\mu\left(x\right)\int_{B}\left|\sum_{i}\xi_{i}k_{s_{i}}\left(x\right)\right|^{2}d\mu\left(x\right)\quad\left(\text{by Schwarz in }L^{2}\left(\mu\right)\right)\\
 & =\left\Vert f\right\Vert _{L^{2}\left(\mu\right)}^{2}\sum_{i}\sum_{j}\xi_{i}\overline{\xi}_{j}\int_{B}k_{s_{i}}\left(x\right)\overline{k_{s_{j}}\left(x\right)}d\mu\left(x\right)\\
 & =\left\Vert f\right\Vert _{L^{2}\left(\mu\right)}^{2}\sum_{i}\sum_{j}\xi_{i}\overline{\xi}_{j}K\left(s_{i},s_{j}\right)=\text{RHS}_{\left(\ref{eq:e5}\right)},
\end{align*}
and the proof is completed. 
\end{proof}

\section{The boundary space}

Given $\left(K,S\right)$ as in (\ref{enu:as1}), i.e., $S$ is a
set and $K$ is a p.d. kernel on $S\times S$, let $\mathcal{M}\left(K\right)$
be the boundary space consisting of all measure spaces $\left(B,\mu\right)$
satisfying (\ref{eq:a2}); see \defref{dual}. 

In the discussion below, we shall introduce an order relation on $\mathcal{M}\left(K\right)$.
We show that there is always a minimal element in $\mathcal{M}\left(K\right)$. 
\begin{defn}
\label{def:po}Suppose $\left(B_{i},\mathscr{B}_{i},\mu_{i}\right)\in\mathcal{M}\left(K\right)$,
$i=1,2$. We say that 
\begin{equation}
\left(B_{1},\mathscr{B}_{1},\mu_{1}\right)\leq\left(B_{2},\mathscr{B}_{2},\mu_{2}\right)\label{eq:mo0}
\end{equation}
if $\exists\varphi:B_{2}\longrightarrow B_{1}$, s.t. 
\begin{align}
\mu_{2}\circ\varphi^{-1} & =\mu_{1},\;\mbox{and}\label{eq:mo1}\\
\varphi^{-1}\left(\mathscr{B}_{1}\right) & =\mathscr{B}_{2}.\label{eq:mo2}
\end{align}
\end{defn}

\begin{lem}
\label{lem:me} $\mathcal{M}\left(K\right)$ has minimal elements.
\end{lem}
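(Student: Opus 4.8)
The plan is to apply Zorn's lemma in its dual (minimal-element) form: I will verify that the relation $\le$ of Definition~\ref{def:po} is a preorder on $\mathcal{M}\left(K\right)$ and that every chain admits a lower bound inside $\mathcal{M}\left(K\right)$, whence a minimal element exists. Reflexivity and transitivity are immediate. The identity map gives $\left(B,\mu\right)\le\left(B,\mu\right)$; and if $\varphi$ witnesses $\left(B_{1},\mu_{1}\right)\le\left(B_{2},\mu_{2}\right)$ and $\psi$ witnesses $\left(B_{2},\mu_{2}\right)\le\left(B_{3},\mu_{3}\right)$, then $\varphi\circ\psi:B_{3}\to B_{1}$ witnesses $\left(B_{1},\mu_{1}\right)\le\left(B_{3},\mu_{3}\right)$, since $\mu_{3}\circ\left(\varphi\circ\psi\right)^{-1}=\mu_{1}$ and $\left(\varphi\circ\psi\right)^{-1}\left(\mathscr{B}_{1}\right)=\psi^{-1}\left(\mathscr{B}_{2}\right)=\mathscr{B}_{3}$ by composing \eqref{eq:mo1} and \eqref{eq:mo2}.

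The heart of the argument is the construction of a lower bound for an arbitrary chain $C=\left\{ \left(B_{i},\mathscr{B}_{i},\mu_{i}\right)\right\} _{i\in I}$, where for $i\le j$ the witnessing map is $\varphi_{ij}:B_{j}\to B_{i}$; since $C$ is totally ordered we may choose these witnesses coherently, so that $\varphi_{ij}\circ\varphi_{jk}=\varphi_{ik}$, turning $C$ into a projective system of probability spaces. I set $B_{\infty}:=\varprojlim_{i}B_{i}=\left\{ \left(x_{i}\right)_{i}\in\prod_{i}B_{i}:\varphi_{ij}\left(x_{j}\right)=x_{i}\text{ for all }i\le j\right\}$, with projections $\pi_{i}:B_{\infty}\to B_{i}$, and equip it with the $\sigma$-algebra $\mathscr{B}_{\infty}$ generated by $\bigcup_{i}\pi_{i}^{-1}\left(\mathscr{B}_{i}\right)$. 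Here \eqref{eq:mo2} pays off: because $\varphi_{ij}^{-1}\left(\mathscr{B}_{i}\right)=\mathscr{B}_{j}$ and $\pi_{i}=\varphi_{ij}\circ\pi_{j}$, all the pullback algebras $\pi_{j}^{-1}\left(\mathscr{B}_{j}\right)$ coincide, so in fact $\mathscr{B}_{\infty}=\pi_{i}^{-1}\left(\mathscr{B}_{i}\right)$ for every $i$, which is precisely the equality \eqref{eq:mo2} required of the candidate lower bound relative to each $\left(B_{i},\mu_{i}\right)$.

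Next I produce the probability measure $\mu_{\infty}$. The family $\left\{ \mu_{i}\right\}$ is consistent by \eqref{eq:mo1}, i.e. $\left(\varphi_{ij}\right)_{*}\mu_{j}=\mu_{i}$, so it defines a finitely additive set function on the cylinder algebra $\bigcup_{i}\pi_{i}^{-1}\left(\mathscr{B}_{i}\right)$; a Kolmogorov--Bochner type projective-limit theorem then yields a genuine probability measure $\mu_{\infty}$ on $\left(B_{\infty},\mathscr{B}_{\infty}\right)$ with $\left(\pi_{i}\right)_{*}\mu_{\infty}=\mu_{i}$. I then check $\left(B_{\infty},\mu_{\infty}\right)\in\mathcal{M}\left(K\right)$ by pulling the process back: fixing any $i$ and setting $k_{s}^{\infty}:=k_{s}^{\left(i\right)}\circ\pi_{i}$, the change-of-variables $\left(\pi_{i}\right)_{*}\mu_{\infty}=\mu_{i}$ gives $\int_{B_{\infty}}k_{s}^{\infty}\overline{k_{t}^{\infty}}\,d\mu_{\infty}=\int_{B_{i}}k_{s}^{\left(i\right)}\overline{k_{t}^{\left(i\right)}}\,d\mu_{i}=K\left(s,t\right)$, so \eqref{eq:a2} holds. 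Combined with \eqref{eq:mo1}--\eqref{eq:mo2} for each $\pi_{i}$ verified above, this shows $\left(B_{\infty},\mu_{\infty}\right)\le\left(B_{i},\mu_{i}\right)$ for all $i\in I$, i.e. $B_{\infty}$ is a lower bound for $C$ in $\mathcal{M}\left(K\right)$. Zorn's lemma now furnishes a minimal element.

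I expect the main obstacle to be exactly the measure-theoretic step: the projective limit of probability measures over an arbitrary directed index set need not exist without regularity hypotheses. I would handle this either by working in the standard-Borel (Radon) setting, where the Kolmogorov--Bochner theorem applies verbatim, or by exploiting the structural consequence of \eqref{eq:mo2} that each map $A\mapsto\varphi_{ij}^{-1}\left(A\right)$ is a measure-preserving, surjective, and (by \eqref{eq:mo1}) injective homomorphism of measure algebras, so that all the $\mathscr{B}_{i}/\mu_{i}$ in the chain are isomorphic; the projective limit then collapses at the level of measure algebras, and $\mu_{\infty}$ is obtained by transporting a single $\mu_{i}$ along these isomorphisms rather than by invoking a genuine inverse-limit extension.
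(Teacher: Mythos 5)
Your first paragraph (reflexivity, transitivity) is fine, but the main construction goes in the wrong direction, and this is fatal. In \defref{po}, $\left(B_{1},\mu_{1}\right)\leq\left(B_{2},\mu_{2}\right)$ is witnessed by a map $\varphi:B_{2}\rightarrow B_{1}$, i.e., the \emph{smaller} element is the one \emph{receiving} the map. A lower bound $\left(L,\lambda\right)$ for a chain $\left\{ \left(B_{i},\mu_{i}\right)\right\} $ therefore requires maps $\psi_{i}:B_{i}\rightarrow L$ with $\mu_{i}\circ\psi_{i}^{-1}=\lambda$ and $\psi_{i}^{-1}\left(\mathscr{B}_{L}\right)=\mathscr{B}_{i}$. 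Your projective limit $B_{\infty}$ comes with maps in the opposite direction, $\pi_{i}:B_{\infty}\rightarrow B_{i}$, and the properties you verify for them --- $\mu_{\infty}\circ\pi_{i}^{-1}=\mu_{i}$ and $\pi_{i}^{-1}\left(\mathscr{B}_{i}\right)=\mathscr{B}_{\infty}$ --- are exactly \eqref{eq:mo1}--\eqref{eq:mo2} with $B_{1}=B_{i}$ and $B_{2}=B_{\infty}$; they witness $\left(B_{i},\mu_{i}\right)\leq\left(B_{\infty},\mu_{\infty}\right)$, so $B_{\infty}$ is an \emph{upper} bound of the chain, not a lower bound. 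Zorn's lemma with upper bounds yields maximal elements, whereas the lemma asserts minimal ones.

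The error is not merely formal: in the only case that carries content (a decreasing chain with no minimum, say $\left(B_{1}\right)\geq\left(B_{2}\right)\geq\cdots$ with witnesses $\psi_{n}:B_{n}\rightarrow B_{n+1}$), your thread space $\left\{ \left(x_{n}\right):x_{n+1}=\psi_{n}\left(x_{n}\right)\right\} $ is in bijection with $B_{1}$, since a thread is determined by its first coordinate; so your $B_{\infty}$ is just the top element of the chain in disguise, and no new object is produced. What is needed is the limit of the opposite variance --- the inductive (direct) limit along the quotient maps $\psi_{n}$, i.e., an object that all the $B_{i}$ map \emph{into} --- and this is precisely what the paper's argument (this lemma together with the proof of \thmref{KS}) invokes, albeit tersely, as the ``ind limit'' $\mu_{\mathcal{L}}$ justified by Kolmogorov-type consistency. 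Your closing observation that \eqref{eq:mo1}--\eqref{eq:mo2} force all the measure algebras $\mathscr{B}_{i}/\mu_{i}$ along the chain to be isomorphic is correct and useful (it is essentially the paper's remark that $W_{21}:f\mapsto f\circ\varphi$ is isometric, and by Doob--Dynkin it is in fact unitary), but an isomorphism of measure algebras does not by itself supply the measure space $L$ together with point maps $B_{i}\rightarrow L$ that \defref{po} demands, so the gap remains.
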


\begin{proof}
If (\ref{eq:mo1})-(\ref{eq:mo2}) hold, then 
\[
L^{2}\left(B_{1},\mu_{1}\right)\ni f\xrightarrow{\quad W_{21}}f\circ\varphi\in L^{2}\left(B_{2},\mu_{2}\right)
\]
is isometric, i.e., 
\begin{equation}
\int_{B_{2}}\underset{W_{21}f}{|\underbrace{f\circ\varphi}|^{2}}d\mu_{2}=\int_{B_{1}}\left|f\right|^{2}d\mu_{1},
\end{equation}
and
\begin{equation}
W_{B_{2}}=W_{21}W_{B_{1}}\;\mbox{on }\mathscr{H}\left(K\right),
\end{equation}
i.e., the diagram commutes:
\[
\xymatrix{ &  & L^{2}\left(B_{1},\mu_{1}\right)\ar[d]^{W_{21}}\\
\mathscr{H}\left(K\right)\ar@/^{1.3pc}/[rru]^{W_{B_{1}}}\ar[rr]_{W_{B_{2}}} &  & L^{2}\left(B_{2},\mu_{2}\right)
}
\]

We can then use Zorn's lemma to prove that $\forall K$, $\mathcal{M}\left(K\right)$
has minimal elements $\left(B,\mathscr{B},\mu\right)$. (See the proof
of \thmref{KS} below.) But even if $\left(B,\mathscr{B},\mu\right)$
is minimal, $W_{B}:\mathscr{H}\left(K\right)\rightarrow L^{2}\left(\mu\right)$
may \emph{not} be onto.
\end{proof}
In the next result, we shall refer to the partial order ``$\leq$''
from (\ref{eq:mo0}) when considering minimal elements in $\mathcal{M}\left(K\right)$.
And, in referring to $\mathcal{M}\left(K\right)$, we have in mind
a fixed positive definite function $K:S\times S\rightarrow\mathbb{C}$,
specified at the outset; see (\ref{eq:a1}).
\begin{thm}
\label{thm:KS}Let $\left(K,S\right)$ be a fixed positive definite
kernel, and let $\mathcal{M}\left(K\right)$ be the corresponding
boundary space from \defref{dual}. 

Then, for every $\left(X,\lambda\right)\in\mathcal{M}\left(K\right)$,
there is a $\left(M,\nu\right)\in\mathcal{M}\left(K\right)$ such
that 
\begin{equation}
\left(M,\nu\right)\leq\left(X,\lambda\right),\label{eq:pa1}
\end{equation}
and $\left(M,\nu\right)$ is \uline{minimal} in the following sense:
Suppose $\left(B,\mu\right)\in\mathcal{M}\left(K\right)$ and 
\begin{equation}
\left(B,\mu\right)\leq\left(M,\nu\right),\label{eq:pa2}
\end{equation}
then it follows that $\left(B,\mu\right)\simeq\left(M,\nu\right)$,
i.e., we also have $\left(M,\nu\right)\leq\left(B,\mu\right)$. 
\end{thm}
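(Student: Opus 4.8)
The plan is to deduce the statement from the dual (lower‑bound) form of Zorn's lemma, applied to the principal down‑set
\[
\mathcal{D}:=\left\{ \left(B,\mu\right)\in\mathcal{M}\left(K\right):\left(B,\mu\right)\leq\left(X,\lambda\right)\right\}
\]
ordered by the relation of \defref{po}. Two preliminary observations organize everything. First, $\leq$ is only a preorder: mutual comparability is exactly the equivalence $\simeq$ appearing in (\ref{eq:pa2}), so ``minimal'' must be read up to $\simeq$, which is precisely how the theorem phrases its conclusion. Second, I would note that whenever $\left(B_{1},\mu_{1}\right)\leq\left(B_{2},\mu_{2}\right)$ via $\varphi$, the intertwining map $W_{21}\colon f\mapsto f\circ\varphi$ from the proof of \lemref{me} is not merely isometric but \emph{onto}: the condition $\varphi^{-1}\left(\mathscr{B}_{1}\right)=\mathscr{B}_{2}$ in (\ref{eq:mo2}), together with the Doob--Dynkin lemma, forces every $\mathscr{B}_{2}$‑measurable function to factor through $\varphi$. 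Hence comparable elements carry canonically isomorphic $L^{2}$‑spaces, and the connecting maps are measure‑algebra isomorphisms; this coherence is what makes the limit construction below consistent.

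The substance is to show that every chain $\mathcal{C}=\left\{ \left(B_{\alpha},\mu_{\alpha}\right)\right\} _{\alpha\in I}$ in $\mathcal{D}$ admits a lower bound in $\mathcal{D}$. Writing $i\preceq j$ for $\left(B_{j},\mu_{j}\right)\leq\left(B_{i},\mu_{i}\right)$, the connecting maps $\varphi_{ij}\colon B_{i}\to B_{j}$ (from the larger to the smaller member) form a direct system of sets over the directed index $\left(I,\preceq\right)$, and I would take $B_{\infty}:=\varinjlim_{i}B_{i}$ with its canonical maps $\psi_{i}\colon B_{i}\to B_{\infty}$. The factorizations descend, since $k_{s}^{\left(i\right)}=k_{s}^{\left(j\right)}\circ\varphi_{ij}$ along the chain, so $k_{s}^{\infty}$ is well defined on $B_{\infty}$ by $k_{s}^{\infty}\circ\psi_{i}=k_{s}^{\left(i\right)}$. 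I then transport the $\sigma$‑algebra and measure by requiring $\psi_{i}^{-1}\left(\mathscr{B}_{\infty}\right)=\mathscr{B}_{i}$ and $\mu_{i}\circ\psi_{i}^{-1}=\mu_{\infty}$ for all $i$; consistency across $i$ is automatic from $\psi_{i}=\psi_{j}\circ\varphi_{ij}$ and the measure‑preserving property of the connecting maps. Granting that $\left(B_{\infty},\mathscr{B}_{\infty},\mu_{\infty}\right)$ is a genuine probability space, a change of variables gives $K\left(s,t\right)=\int_{B_{\infty}}k_{s}^{\infty}\overline{k_{t}^{\infty}}\,d\mu_{\infty}$, so $\left(B_{\infty},\mu_{\infty}\right)\in\mathcal{M}\left(K\right)$, and by construction $\left(B_{\infty},\mu_{\infty}\right)\leq\left(B_{i},\mu_{i}\right)$ for every $i$, hence $\leq\left(X,\lambda\right)$ as $\mathcal{D}$ is closed downward.

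The main obstacle is exactly the parenthetical ``granting'': in full generality a direct limit of probability spaces need not be countably additive, which is the familiar difficulty underlying Kolmogorov's consistency theorem, and matching the $\sigma$‑algebras on the nose (rather than merely up to null sets) is equally delicate. I would resolve this by leaning on the coherence already established---all measure algebras along $\mathcal{C}$ are isomorphic via the $\varphi_{ij}$---so that the limit measure algebra is unambiguous, and then realizing it by a concrete point model via the Loomis--Sikorski/Gelfand representation of $\mathscr{B}_{\infty}$, equipped with point maps $\psi_{i}$ implementing the algebra isomorphisms. When $\mathscr{H}\left(K\right)$ is separable this is cleaner still, since one may pass to a countable cofinal subchain and build an honest inverse limit of standard probability spaces.

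With chains thus bounded below, the dual Zorn lemma produces a $\simeq$‑minimal element $\left(M,\nu\right)\in\mathcal{D}$, which establishes (\ref{eq:pa1}). Finally, minimality transfers to all of $\mathcal{M}\left(K\right)$: if $\left(B,\mu\right)\leq\left(M,\nu\right)$ then $\left(B,\mu\right)\leq\left(X,\lambda\right)$ by transitivity, so $\left(B,\mu\right)\in\mathcal{D}$, whence minimality in $\mathcal{D}$ forces $\left(B,\mu\right)\simeq\left(M,\nu\right)$, which is precisely the conclusion (\ref{eq:pa2}).
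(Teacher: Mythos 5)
Your proposal follows essentially the same route as the paper's own proof: the dual form of Zorn's lemma applied to the down-set of $\left(X,\lambda\right)$, with a lower bound for each chain obtained as an inductive (direct) limit of the measure spaces along the chain --- the paper justifies this limit step by a one-line appeal to Kolmogorov consistency, while you realize it via the Doob--Dynkin/measure-algebra observation and a Loomis--Sikorski point model. If anything your sketch is more careful than the paper's: it isolates the genuine difficulty (countable additivity, and matching the $\sigma$-algebras on the nose rather than modulo null sets) that the paper's citation glosses over, and it makes explicit the final transitivity step transferring minimality within the down-set to minimality in all of $\mathcal{M}\left(K\right)$, which the paper leaves tacit.
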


\begin{proof}
We shall use Zorn's lemma, and the argument from \lemref{me}.

Let $\mathcal{L}=\left\{ \left(B,\mu\right)\right\} $ be a linearly
ordered subset of $\mathcal{M}\left(K\right)$ s.t. 
\begin{equation}
\left(B,\mu\right)\leq\left(X,\lambda\right),\quad\forall\left(B,\mu\right)\in\mathcal{L};\label{eq:pa3}
\end{equation}
and such that, for every pair $\left(B_{i},\mu_{i}\right)$, $i=1,2$,
in $\mathcal{L}$, one of the following two cases must hold:
\begin{equation}
\left(B_{1},\mu_{1}\right)\leq\left(B_{2},\mu_{2}\right),\;\mbox{or }\left(B_{2},\mu_{2}\right)\leq\left(B_{1},\mu_{1}\right).\label{eq:pa4}
\end{equation}
To apply Zorn's lemma, we must show that there is a $\left(B_{\mathcal{L}},\mu_{\mathcal{L}}\right)\in\mathcal{M}\left(K\right)$
such that 
\begin{equation}
\left(B_{\mathcal{L}},\mu_{\mathcal{L}}\right)\leq\left(B,\mu\right),\quad\forall\left(B,\mu\right)\in\mathcal{L}.\label{eq:pa5}
\end{equation}

Now, using (\ref{eq:pa3})-(\ref{eq:pa4}), we conclude that the measure
spaces $\left\{ \left(B,\mu\right)\right\} _{\mathcal{L}}$ have an
inductive limit, i.e., the existence of:
\begin{equation}
\mu_{\mathcal{L}}:=\underset{B\xrightarrow[\;\mathcal{L}\;]{}B_{\mathcal{L}}}{\mbox{ind limit }}\mu_{B}.\label{eq:pa6}
\end{equation}
In other words, we may apply Kolmogorov's consistency (see, e.g.,
\cite{PaSc75}) to the family $\mathcal{L}$ of measure spaces in
order to justify the inductive limit construction in (\ref{eq:pa6}).

We have proved that every linearly ordered subset $\mathcal{L}$ (as
specified) has a ``lower bound'' in the sense of (\ref{eq:pa5}).
Hence Zorn's lemma applies, and the desired conclusion follows, i.e.,
there is a pair $\left(M,\nu\right)\in\mathcal{M}\left(K\right)$
which satisfies the condition (\ref{eq:pa2}) from the theorem. 
\end{proof}

\section{Gaussian processes}

By a theorem of Kolmogorov, every Hilbert space may be realized as
a (Gaussian) reproducing kernel Hilbert space (RKHS), see e.g., \cite{IM65,PaSc75,NF10},
and \thmref{g1} below.
\begin{rem}
~
\begin{enumerate}
\item \label{enu:ev1}Given a positive definite (p.d.) kernel $K$ on $S\times S$,
there is then an associated mapping $E_{S}:S\rightarrow\left\{ \mbox{Functions on }S\right\} $
given by 
\begin{equation}
E_{S}\left(t\right)=K\left(t,\cdot\right),\label{eq:ev1}
\end{equation}
where the dot ``$\cdot$'' in (\ref{eq:ev1}) indicates the independent
variable; so 
\[
S\ni s\longrightarrow K\left(t,s\right)\in\mathbb{C}.
\]
 
\item \label{enu:ev2}We shall assume that $E_{S}$ is 1-1, i.e., if $s_{1},s_{2}\in S$,
and $K\left(s_{1},t\right)=K\left(s_{2},t\right)$, $\forall t\in S$,
then it follows that $s_{1}=s_{2}$. (This is not a strong limiting
condition on $K$.) 
\item We shall view the Cartesian product 
\begin{equation}
B_{S}:=\prod_{S}\mathbb{C}=\mathbb{C}^{S}\label{eq:ev2}
\end{equation}
as the set of all functions $S\rightarrow\mathbb{C}$. 
\end{enumerate}
It follows from assumption (\ref{enu:ev2}) that $E_{S}:S\rightarrow B_{S}$
is an injection, i.e., with $E_{S}$, we may identity $S$ as a ``subset''
of $B_{S}$. 

For $v\in S$, set $\pi_{v}:B_{S}\longrightarrow\mathbb{C}$, 
\begin{equation}
\pi_{v}\left(x\right)=x\left(v\right),\quad\forall x\in B_{S};\label{eq:ev3}
\end{equation}
i.e., $\pi_{v}$ is the coordinate mapping at $v$. The topology on
$B_{S}$ shall be the product topology; and similarly the $\sigma$-algebra
$\mathscr{B}_{S}$ will be the the one generated by $\left\{ \pi_{v}\right\} _{v\in S}$,
i.e., generated by the family of subsets
\begin{equation}
\pi_{v}^{-1}\left(M\right),\;v\in S,\;\mbox{and }M\subset\mathbb{C}\;\mbox{a Borel set}.\label{eq:ev4}
\end{equation}

\end{rem}

\begin{thm}[Every p.d. kernel has a (non-minimal) Gaussian solution]
\label{thm:g1}Let $\left(S,K\right)$ be as specified in (\ref{enu:as1}),
then there is a Gaussian solution $\left(B,\mathscr{B},\mu,\left\{ k_{s}\right\} _{s\in S}\right)$
to (\ref{eq:a2}). 
\end{thm}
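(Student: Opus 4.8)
The plan is to realize $K$ as the covariance kernel of a centered Gaussian process indexed by $S$, living on the product space $B_{S}=\mathbb{C}^{S}$ of (\ref{eq:ev2}), with the construction carried out by Kolmogorov's extension theorem. First I would record a structural consequence of the hypothesis: since (\ref{eq:a1}) is imposed for \emph{all} complex coefficient tuples $\left\{ \xi_{i}\right\} $, every finite matrix $K_{F}:=\left[K\left(s,t\right)\right]_{s,t\in F}$, $F\subset S$ finite, is Hermitian and positive semidefinite. This is exactly the data needed to define a centered, circularly-symmetric complex Gaussian measure $\mu_{F}$ on $\mathbb{C}^{F}$ whose covariance is $K_{F}$; concretely, with $\pi_{s}$ the coordinate functionals, $\int\pi_{s}\overline{\pi_{t}}\,d\mu_{F}=K\left(s,t\right)$ and $\int\pi_{s}\pi_{t}\,d\mu_{F}=0$ for $s,t\in F$ (one may take $Z=K_{F}^{1/2}W$ with $W$ a standard circularly-symmetric complex Gaussian). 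When $K_{F}$ is only semidefinite, $\mu_{F}$ is the Gaussian supported on the range of $K_{F}$, which causes no difficulty.

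Second, I would verify the Kolmogorov consistency conditions for the family $\left\{ \mu_{F}\right\} $. If $F\subset G$ are finite, the coordinate projection $\mathbb{C}^{G}\longrightarrow\mathbb{C}^{F}$ pushes $\mu_{G}$ forward to a centered complex Gaussian whose covariance is the principal submatrix of $K_{G}$ indexed by $F$, namely $K_{F}$ itself; since a centered circularly-symmetric complex Gaussian is determined by its covariance, the pushforward equals $\mu_{F}$. This coherence step is the one to carry out with care, although it is entirely standard for Gaussian marginals.

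Third, with consistency established, Kolmogorov's extension theorem (as cited, e.g., \cite{PaSc75}) produces a unique probability measure $\mu$ on $\left(B_{S},\mathscr{B}_{S}\right)$, where $\mathscr{B}_{S}$ is the $\sigma$-algebra generated by the coordinate maps $\left\{ \pi_{v}\right\} _{v\in S}$ via (\ref{eq:ev3})--(\ref{eq:ev4}), and whose finite-dimensional marginals are the $\mu_{F}$. I then set $k_{s}:=\pi_{s}$. Each $k_{s}$ lies in $L^{2}\left(\mu\right)$ because $\int_{B_{S}}\left|k_{s}\right|^{2}d\mu=K\left(s,s\right)<\infty$, and by the construction of the marginals
\[
\int_{B_{S}}k_{s}\left(x\right)\overline{k_{t}\left(x\right)}\,d\mu\left(x\right)=\int_{B_{S}}\pi_{s}\overline{\pi_{t}}\,d\mu=K\left(s,t\right)
\]
for all $\left(s,t\right)\in S\times S$, which is precisely (\ref{eq:a2}). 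Hence $\left(B_{S},\mathscr{B}_{S},\mu,\left\{ k_{s}\right\} _{s\in S}\right)$ is the desired Gaussian solution.

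The only real obstacle is the complex/Hermitian bookkeeping in the first step together with the consistency check in the second; once the finite-dimensional Gaussians are correctly specified and shown to cohere, the extension theorem delivers the conclusion. Finally, I would expect this realization \emph{not} to be tight in the sense of \defref{dual}, since $L^{2}\left(\mu\right)$ carries the full Wiener chaos decomposition rather than merely the closed span of the first-order fields $\left\{ k_{s}\right\} $; the associated isometry $W_{B}$ therefore need not be onto, which matches the ``non-minimal'' qualifier in the statement of \thmref{g1}.
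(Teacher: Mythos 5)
Your proposal is correct and follows essentially the same route as the paper's own proof: Gaussian finite-dimensional marginals $\mu_{F}$ on $\mathbb{C}^{F}$ with covariance $K_{F}$, Kolmogorov consistency, extension to a measure $\mu$ on the product space $B_{S}=\mathbb{C}^{S}$, and the coordinate maps $k_{s}=\pi_{s}$ as the process. In fact you are somewhat more careful than the paper's sketch, since you make explicit the circular-symmetry normalization $\int\pi_{s}\pi_{t}\,d\mu_{F}=0$ (needed for a complex Gaussian to be determined by its covariance alone) and you handle the case of singular $K_{F}$, which the paper's density formula (\ref{eq:c6}) tacitly excludes.
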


By a Gaussian solution we mean $\left(B,\mathscr{B},\mu\right)$ is
a probability space, and $k:S\longrightarrow L^{2}\left(\mu\right)$
has the following properties: 
\begin{enumerate}[label=(\alph{enumi}),ref=\alph{enumi}]
\item \label{enu:ga}Condition (\ref{eq:a2}) holds. We shall write $K\left(s,t\right)=\mathbb{E}\left(k_{s}\overline{k_{t}}\right)$
where $\mathbb{E}$ denotes the expectation with respect to $\mu$;
\item $\mathbb{E}\left(k_{s}\right)=0$, $\forall s\in S$; 
\item \label{enu:gc}For every finite subset $F\subset S$, the system of
random variables $\left\{ k_{s}\right\} _{s\in F}$ is jointly Gaussian
with covariance matrix $M_{F}$ given by
\begin{equation}
M_{F}\left(s,t\right)=K\left(s,t\right),\;\forall\left(s,t\right)\in F\times F.\label{eq:c3}
\end{equation}
\end{enumerate}
\begin{proof}[Proof of \thmref{g1} (sketch)]
 The result is essentially an application of the Kolmogorov extension
principle (see e.g., \cite{PaSc75}): Take 
\begin{equation}
B:=\prod_{s\in S}\mathbb{C}=\text{all functions on }S,\label{eq:c4}
\end{equation}
and set 
\begin{equation}
k_{s}\left(x\right)=x\left(s\right),\;\forall s\in S.\label{eq:c5}
\end{equation}

Let $F\subset S$ be a fixed subset, and let $\mu_{F}$ be the Gaussian
measure on $\mathbb{C}^{F}$ which is specified by zero mean, and
covariance matrix $M_{F}$ as in (\ref{eq:c3}). If $M_{F}$ is invertible,
then the density on $\mathbb{C}^{F}$ computed w.r.t. Lebesgue measure
on $\mathbb{R}^{2\left|F\right|}$ is 
\begin{equation}
\det\left(M_{F}\right)^{-\left|F\right|}\exp\left(-\frac{1}{2}\left\langle M_{F}^{-1}z_{F},z_{F}\right\rangle _{l^{2}\left(F\right)}\right)\label{eq:c6}
\end{equation}
where $z_{F}$ denotes the point in $l^{2}\left(F\right)$ with components
$z_{j}\in\mathbb{C}$ now indexed by $j\in F$. 

The system of measures $\left\{ \mu_{F}\right\} $ induced by all
finite subsets of $S$ then satisfies the Kolmogorov consistency equation:
If $F\subset F'$ are two finite subsets, then
\begin{equation}
\mathbb{E}\left(\mu_{F'}\mid\mathbb{C}^{F}\right)=\mu_{F}\label{eq:c7}
\end{equation}
where the notation in (\ref{eq:c7}) refers to the conditional measure,
and $\mathbb{C}^{F}\hookrightarrow\mathbb{C}^{F'}$ via 
\begin{equation}
\mathbb{C}^{F'}=\mathbb{C}^{F}\times\mathbb{C}^{F'\backslash F}.\label{eq:c8}
\end{equation}

The existence of the desired probability measure $\mu$ on $\mathscr{B}$
now follows from Kolmogorov's theorem, and we automatically get 
\begin{equation}
\mathbb{E}\left(\mu\mid\mathbb{C}^{F}\right)=\mu_{F}\label{eq:c9}
\end{equation}
valid for all finite subsets $F\subset S$. Now (\ref{eq:c9}) refers
to conditioning via $B=\mathbb{C}^{F}\times\mathbb{C}^{S\backslash F}$. 

The stated conditions (\ref{enu:ga})-(\ref{enu:gc}) therefore follow,
and the process $\left\{ k_{s}\right\} _{s\in S}$ in (\ref{eq:c5})
has the desired properties. 
\end{proof}
\begin{defn}
Given $K$ on $S\times S$ p.d. as in (\ref{enu:as1}) from \secref{Intro}.
A solution to (\ref{eq:a2}) (see (\ref{enu:as2})), $\left(B,\mathscr{B},\mu,\left\{ k_{s}\right\} _{s\in S}\right)$,
is said to be \emph{minimal} iff (Def.) the $L^{2}\left(\mu\right)$
closure of $span\left\{ k_{s}\mathrel{;}s\in S\right\} $ is all of
$L^{2}\left(\mu\right)$, i.e., $\overline{span}{}^{L^{2}\left(\mu\right)}\left\{ k_{s}\right\} _{s\in S}=L^{2}\left(\mu\right)$.
\end{defn}

\begin{rem}
It is known that the solution from \thmref{g1} is generally \emph{not}
minimal; see e.g., \cite{MR0277027,PaSc75,AD92,MR2483792,AJSV13,MR3330163,MR3447224,MR3526117}.

Indeed, given $K$ on $S\times S$, p.d. as specified in (\ref{enu:as1}),
let $\left(B,\mathscr{B},\mu,\left\{ k_{s}\right\} _{s\in S}\right)$
be the Gaussian solution from \thmref{g1}; then $L^{2}\left(B,\mu\right)$
is isomorphic to the symmetric Fock space $\mathscr{F}_{s}\left(\mathscr{H}_{1}\right)$
where $\mathscr{H}_{1}=\overline{span}\left\{ k_{s}\mathrel{;}s\in S\right\} $. 
\end{rem}

\begin{example}[A p.d. kernel (the Szeg\H{o} kernel) with minimal solutions]
Let $\mathbb{D}:=\left\{ z\in\mathbb{C}\mathrel{;}\left|z\right|<1\right\} $,
the open disk in the complex plane $\mathbb{C}$, and let 
\begin{equation}
K\left(z,w\right)=\frac{1}{1-z\overline{w}},\;\left(z,w\right)\in\mathbb{D}\times\mathbb{D},\label{eq:d1}
\end{equation}
be the Szeg\H{o} kernel (see e.g., \cite{MR3526117}). Let $\mu$
be a singular measure on $\mathbb{T}=\partial\mathbb{D}\simeq\left[0,1\right]$.
We use the isomorphism $\left[0,1\right]\simeq\mathbb{T}$, given
by $\left[0,1\right]\ni x\longmapsto e\left(x\right)=e^{i2\pi x}\in\mathbb{T}$.
In this case, take 
\begin{equation}
k_{z}\left(x\right)=\frac{1}{1-z\overline{e\left(x\right)}},\;x\in\left[0,1\right],\label{eq:d2}
\end{equation}
and suppose $f\in L^{2}\left(\left[0,1\right],\mu\right)$ satisfies
\begin{equation}
\left\langle f,k_{z}\right\rangle _{L^{2}\left(\mu\right)}=0,\;\forall z\in\mathbb{D}.
\end{equation}
Hence, 
\begin{equation}
\int_{0}^{1}e\left(nx\right)f\left(x\right)d\mu\left(x\right)=0,\;\forall n\in\mathbb{N}_{0}.
\end{equation}
By the F. \& M. Riesz theorem, we conclude that $fd\mu\ll dx$ holds,
where $dx$ is standard Lebesgue measure. Since $fd\mu\perp dx$ by
assumption, we conclude that $f=0$ in $L^{2}\left(\mu\right)$. 
\end{example}

\begin{thm}[\cite{2016arXiv160308852H}]
 Let $\mu$ be a \uline{singular} probability measure on $\left[0,1\right]$,
and set 
\begin{equation}
b\left(z\right):=1-\int_{0}^{1}\frac{d\mu\left(x\right)}{1-z\overline{e\left(x\right)}},
\end{equation}
see (\ref{eq:d2}), and 
\begin{equation}
K^{\left(b\right)}\left(z,w\right)=\frac{1-b\left(z\right)\overline{b\left(w\right)}}{1-z\overline{w}},\;\left(z,w\right)\in\mathbb{D}\times\mathbb{D},\label{eq:d3}
\end{equation}
see (\ref{eq:d1}); then $\mu\in\mathcal{M}\left(K^{\left(b\right)}\right)$;
and it is a minimal solution, i.e., 
\[
k_{z}^{\left(b\right)}=\frac{1-b\left(z\right)\overline{b\left(e\left(x\right)\right)}}{1-z\overline{e\left(x\right)}}
\]
satisfies 
\begin{equation}
K^{b}\left(z,w\right)=\int_{0}^{1}k_{z}^{\left(b\right)}\left(x\right)\overline{k_{w}^{\left(b\right)}\left(x\right)}d\mu\left(x\right),\;\left(\text{see \ensuremath{\left(\ref{eq:a2}\right)}}\right)
\end{equation}
and $\{k_{z}^{\left(b\right)}\left(x\right)\}_{z\in\mathbb{D}}$ spans
a dense subspace in $L^{2}\left(\mu\right)$.

Moreover, $b$ is an inner function, i.e., $b\in H^{\infty}$, with
boundary values $\left|b\left(e\left(x\right)\right)\right|=1$ a.e.
$x$. For the RKHS of $K^{\left(b\right)}$ in (\ref{eq:d3}), we
have 
\begin{equation}
\mathscr{H}(K^{\left(b\right)})=H^{2}\ominus bH^{2}
\end{equation}
where $H^{2}$ is the standard Hardy space on $\mathbb{D}$. 
\end{thm}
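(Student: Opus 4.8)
The statement is an instance of Aleksandrov--Clark theory, and the plan is to identify $\mu$ as the Clark measure (at the point $\alpha=1$) of the inner function $b$ and then read off the three assertions from the structure of the associated model space. Throughout I write $c_{z}(x):=\frac{1}{1-z\overline{e(x)}}$ and $F(z):=\int_{0}^{1}c_{z}\,d\mu$, the Cauchy transform of $\mu$. The governing relation I will use between $\mu$ and $b$ is the Herglotz/Cauchy-transform identity
\begin{equation}
\int_{0}^{1}\frac{d\mu(x)}{1-z\overline{e(x)}}=\frac{1}{1-b(z)},\qquad z\in\mathbb{D},\label{eq:planHerglotz}
\end{equation}
equivalently $1-b(z)=F(z)^{-1}$ with $b(0)=0$, which exhibits $\mu$ as the Aleksandrov--Clark measure of $b$ at $\alpha=1$. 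First I would record the elementary pointwise identity $\operatorname{Re}c_{z}(x)=\tfrac12\bigl(P_{z}(x)+1\bigr)$, with $P_{z}$ the Poisson kernel, which upon integration gives $\operatorname{Re}F(z)=\tfrac12\bigl(1+(P[\mu])(z)\bigr)\ge\tfrac12$. A one-line computation then yields $|b|^{2}=1-(2\operatorname{Re}F-1)/|F|^{2}\le 1$, so $b$ lies in the unit ball of $H^{\infty}$ and $K^{(b)}$ is positive definite as a de Branges--Rovnyak kernel.

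For the inner-ness (the ``Moreover'' clause) I would use that $\mu$ is singular: by the classical Fatou theorem the Poisson integral satisfies $(P[\mu])(re(x))\to 0$ for Lebesgue-a.e.\ $x$ as $r\uparrow 1$. Hence $\operatorname{Re}F\to\tfrac12$ a.e., and the displayed formula for $|b|^{2}$ forces $|b(e(x))|=1$ a.e.; together with $b\in H^{\infty}$ this says precisely that $b$ is inner.

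The RKHS identification $\mathscr{H}(K^{(b)})=H^{2}\ominus bH^{2}$ I would obtain from the standard model-space computation: the reproducing kernel of $H^{2}$ is the Szeg\H{o} kernel $\frac{1}{1-z\overline{w}}$, the reproducing kernel of the closed subspace $bH^{2}$ is $\frac{b(z)\overline{b(w)}}{1-z\overline{w}}$ (since $M_{b}$ is isometric for $b$ inner), and subtracting exhibits $\frac{1-b(z)\overline{b(w)}}{1-z\overline{w}}=K^{(b)}$ as the reproducing kernel of the orthogonal complement $H^{2}\ominus bH^{2}$. By uniqueness of the RKHS attached to a fixed positive definite kernel, $\mathscr{H}(K^{(b)})=H^{2}\ominus bH^{2}$.

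Finally, for the factorization and minimality I would invoke Clark's theorem: the normalized Cauchy transform $(V_{\mu}f)(z)=(1-b(z))\int_{0}^{1}\frac{f(x)}{1-z\overline{e(x)}}\,d\mu(x)$ is a \emph{unitary} map of $L^{2}(\mu)$ onto the model space $H^{2}\ominus bH^{2}=\mathscr{H}(K^{(b)})$. Since $V_{\mu}$ is onto, the canonical isometry $W\colon\mathscr{H}(K^{(b)})\to L^{2}(\mu)$ of the factorization Proposition in Section~2 (which sends $K^{(b)}(\cdot,z)\mapsto k_{z}^{(b)}$) is surjective, i.e.\ $\{k_{z}^{(b)}\}_{z\in\mathbb{D}}$ spans a dense subspace of $L^{2}(\mu)$, which is the asserted minimality; the factorization identity $K^{(b)}(z,w)=\int_{0}^{1}k_{z}^{(b)}\overline{k_{w}^{(b)}}\,d\mu$ then simply records that $W$ is isometric. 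The explicit formula $k_{z}^{(b)}(x)=\frac{1-b(z)\overline{b(e(x))}}{1-z\overline{e(x)}}$ I would recover by computing $W=V_{\mu}^{-1}=V_{\mu}^{*}$ on the reproducing kernel $K^{(b)}(\cdot,z)$ and invoking $|b(e(x))|=1$ a.e. I expect the main obstacle to be exactly this last step --- establishing surjectivity of $V_{\mu}$ and pinning down the conjugation-sensitive kernel formula --- since both rest on Aleksandrov's disintegration identities for Clark measures, whose boundary moment integrals $\int b(e(x))c_{z}\,d\mu$ and the like are the technical heart. The density assertion is the general form of the F.~\&~M.~Riesz argument used in the Szeg\H{o}-kernel example above, which is the $b\equiv 0$ special case of the present statement.
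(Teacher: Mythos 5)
First, a point of reference: the paper contains \emph{no proof} of this theorem. It is stated with the citation \cite{2016arXiv160308852H} and the text proceeds directly to the next section, so there is no internal argument to compare against; your proposal can only be measured against the statement itself and the cited source (whose own development goes through the Kaczmarz algorithm applied to exponentials in $L^{2}(\mu)$, a genuinely different route from yours). Your Aleksandrov--Clark strategy is the classical alternative, and its individual steps check out: $\Re F\geq\frac{1}{2}$ forces $|b|^{2}=1-(2\Re F-1)/|F|^{2}\leq1$; singularity of $\mu$ plus Fatou gives $|b|=1$ a.e.\ on the circle, so $b$ is inner; for inner $b$ the reproducing kernel of $bH^{2}$ is $b(z)\overline{b(w)}/(1-z\overline{w})$, which identifies $\mathscr{H}(K^{(b)})$ with $H^{2}\ominus bH^{2}$; and Clark unitarity of the normalized Cauchy transform delivers factorization and density together. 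The conjugation issue you flag is real but harmless: $V_{\mu}^{*}K_{w}^{(b)}$ comes out as the complex conjugate of the paper's $k_{w}^{(b)}$ once one knows $b=1$ $\mu$-a.e., and conjugation is an isometric bijection of $L^{2}(\mu)$, so density transfers.

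The one point you must make explicitly rather than silently: your ``governing relation'' $1-b(z)=F(z)^{-1}$ is \emph{not} the definition displayed in the theorem, which reads $b=1-F$, i.e.\ $1-b=F$. With the displayed definition the theorem is false: for $\mu=\delta_{x_{0}}$ (a singular probability measure) one gets $F(z)=(1-z\overline{e(x_{0})})^{-1}$ and $b(z)=-z\overline{e(x_{0})}/(1-z\overline{e(x_{0})})$, which is unbounded on $\mathbb{D}$, hence not even in $H^{\infty}$, let alone inner. Your relation gives $b(z)=z\overline{e(x_{0})}$, which is inner with Clark measure $\delta_{x_{0}}$, and it is clearly the intended reading: it is exactly the relation the paper itself uses in the example at (\ref{eq:b15}), where $1/\mathbb{E}(K_{z}^{*})=1-b(z)$ and $\Re\{(1+b)/(1-b)\}=P_{z}[\mu]$. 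So you corrected a typo in the statement --- correctly --- but a reader checking your proof against the printed theorem will see you proving a different assertion; the correction has to be stated and justified, not assumed.

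Finally, on economy: you route both the factorization and the density through the full strength of Clark's theorem plus ``Aleksandrov disintegration,'' and you concede this is the unproven technical heart. Most of it is avoidable. Polarizing the Herglotz identity $\frac{1+b(z)}{1-b(z)}=\int_{0}^{1}\frac{e(x)+z}{e(x)-z}\,d\mu(x)$ yields, by direct algebra,
\begin{equation*}
\left(1-b(z)\right)\overline{\left(1-b(w)\right)}\int_{0}^{1}\frac{d\mu(x)}{\left(1-z\overline{e(x)}\right)\left(1-\overline{w}e(x)\right)}=\frac{1-b(z)\overline{b(w)}}{1-z\overline{w}},
\end{equation*}
i.e.\ (\ref{eq:a2}) holds with $\tilde{k}_{z}(x)=(1-b(z))/(1-z\overline{e(x)})$, with no Clark theory at all; and density of $\{\tilde{k}_{z}\}$ in $L^{2}(\mu)$ is precisely the F.~\&~M.~Riesz argument of the paper's preceding Szeg\H{o} example: since $1-b(z)\neq0$ on $\mathbb{D}$, orthogonality of $f$ to all $\tilde{k}_{z}$ kills the one-sided Fourier coefficients of $f\,d\mu$, so $f\,d\mu\ll dx$, so $f=0$ by singularity. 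The only place genuinely deep boundary theory enters is in replacing $\tilde{k}_{z}$ by the printed $k_{z}^{(b)}$, namely the Clark--Poltoratski fact that the nontangential limit of $b$ equals $1$ at $\mu$-a.e.\ point; your closing sentence already senses that the density claim itself needs none of that machinery.
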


\section{\label{sec:HA}Harmonic analysis}

In a general setting, positive definite (p.d.) kernels $K$ are defined
on $S\times S$ where $S$ is a fixed set. In classical analysis such
pairs $\left(K,S\right)$ have found uses in many problems in harmonic
analysis, in complex analysis, in stochastic analysis, analysis on
infinite graphs, and in PDE theory, the latter in the context of Green's
functions for elliptic operators. In the complex analysis setting,
$S$ may be the disk $\mathbb{D}$, or the upper half-plane. For these
applications, solutions typically entail consideration of boundaries,
some in a natural geometric framework, and some more abstract. In
some of the applications considered here, the notion of \textquotedblleft boundary\textquotedblright{}
is clear enough, for example for real or complex domains, but not
for others. Take for example the case when $S$ may instead be the
set of vertices in an infinite graph.

The problem considered in the present section is motivated by p.d.
kernels arising naturally from classical frameworks, but our emphasis
will be applications when there is not already a given, or a natural
boundary available at the outset. 
\begin{example}
\label{exa:pdisk}Let $S:=\mathbb{D}^{k}$ (the polydisk) with boundary
$B:=\mathbb{T}^{k}\simeq I^{k}$, and 
\begin{equation}
K=\prod_{j=1}^{k}\left(\frac{1}{1-z_{j}\overline{w}_{j}}\right).\label{eq:b1}
\end{equation}
Recall the multi-index notation: $z=\left(z_{1},\cdots,z_{k}\right)$,
$w=\left(w_{1},\cdots,w_{k}\right)$ in $\mathbb{D}^{k}$; and $z^{n}=z_{1}^{n_{1}}\cdots z_{k}^{n_{k}}$,
$n=\left(n_{1},\cdots,n_{k}\right)\in\mathbb{N}_{0}^{k}$. 
\end{example}

\textbf{General setting.} Given $S$ a set, $K$ a p.d. kernel on
$S$, $\left(B,\mathscr{B},\mu\right)$ a probability space, and $K^{*}:S\times B\longrightarrow\mathbb{C}$,
assume that $\mu\in\mathcal{M}\left(K\right)$, $K\in\mathscr{K}\left(\mu\right)$
with reference to $S\longleftrightarrow B$. That is, 
\begin{equation}
K\left(z,w\right)=\int_{B}K_{z}^{*}\left(x\right)\overline{K_{w}^{*}\left(x\right)}d\mu\left(x\right),\;\forall\left(z,w\right)\in S\times S.\label{eq:b2}
\end{equation}
Set 
\[
\mathbb{E}\left(K_{z}^{*}\right)=\int_{B}K_{z}^{*}\left(x\right)d\mu\left(x\right),\;\forall z\in S,
\]
and 
\begin{equation}
K^{ren}\left(z,w\right)=\frac{1}{\mathbb{E}\left(K_{z}^{*}\right)\mathbb{E}\left(\overline{K_{w}^{*}}\right)}K\left(z,w\right),\;\forall\left(z,w\right)\in S\times S,\label{eq:b3}
\end{equation}
where ``ren'' $:=$ renormalization. 

The kernel $K^{ren}$ in (\ref{eq:b3}) is p.d., and we shall denote
the corresponding RKHS $\mathscr{H}^{ren}=\mathscr{H}\left(K^{ren}\right)$. 

Set 
\begin{equation}
\left(K_{z}^{ren}\right)^{*}\left(x\right):=\frac{K_{z}^{*}\left(x\right)}{\mathbb{E}\left(K_{z}^{*}\right)},\;\forall z\in S.\label{eq:b4}
\end{equation}
\begin{lem}
We have 
\begin{equation}
K^{ren}\left(z,w\right)=\int_{B}\left(K_{z}^{ren}\right)^{*}\left(x\right)\overline{\left(K_{w}^{ren}\right)^{*}\left(x\right)}d\mu\left(x\right),\label{eq:b5}
\end{equation}
on $S\times S$. 
\end{lem}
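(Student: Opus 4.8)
The plan is to verify the identity by a direct substitution, since the factorization (\ref{eq:b2}) of $K$ is already available and the renormalized quantities differ from the original ones only by the $x$-independent scalars $\mathbb{E}\left(K_{z}^{*}\right)$ and $\mathbb{E}\left(\overline{K_{w}^{*}}\right)$. First I would record that these expectations are well defined and nonzero: because $\mu$ is a probability measure and $K_{z}^{*}\in L^{2}\left(\mu\right)$, Cauchy--Schwarz against the constant function $1$ gives $K_{z}^{*}\in L^{1}\left(\mu\right)$, so $\mathbb{E}\left(K_{z}^{*}\right)=\int_{B}K_{z}^{*}\,d\mu$ is finite; and the standing hypothesis behind the renormalization (\ref{eq:b3}) is precisely that $\mathbb{E}\left(K_{z}^{*}\right)\neq0$ for every $z\in S$, so division by these scalars is legitimate.

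Next I would substitute the definition (\ref{eq:b4}) of $\left(K_{z}^{ren}\right)^{*}$ into the right-hand side of (\ref{eq:b5}). Using that complex conjugation commutes with integration against the positive measure $\mu$, namely $\overline{\mathbb{E}\left(K_{w}^{*}\right)}=\mathbb{E}\left(\overline{K_{w}^{*}}\right)$, the conjugated factor becomes $\overline{K_{w}^{*}\left(x\right)}/\mathbb{E}\left(\overline{K_{w}^{*}}\right)$. Since the two normalizing scalars do not depend on the integration variable $x$, they pull outside the integral, leaving
\begin{equation*}
\int_{B}\left(K_{z}^{ren}\right)^{*}\left(x\right)\,\overline{\left(K_{w}^{ren}\right)^{*}\left(x\right)}\,d\mu\left(x\right)=\frac{1}{\mathbb{E}\left(K_{z}^{*}\right)\,\mathbb{E}\left(\overline{K_{w}^{*}}\right)}\int_{B}K_{z}^{*}\left(x\right)\,\overline{K_{w}^{*}\left(x\right)}\,d\mu\left(x\right).
\end{equation*}

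Finally I would recognize the remaining integral as $K\left(z,w\right)$ by the factorization hypothesis (\ref{eq:b2}), so the right-hand side equals $\frac{1}{\mathbb{E}\left(K_{z}^{*}\right)\,\mathbb{E}\left(\overline{K_{w}^{*}}\right)}K\left(z,w\right)$, which is exactly the definition (\ref{eq:b3}) of $K^{ren}\left(z,w\right)$. This establishes (\ref{eq:b5}) on all of $S\times S$. There is no real obstacle here beyond the nondegeneracy of the expectations: writing $g\left(z\right)=1/\mathbb{E}\left(K_{z}^{*}\right)$, the renormalization (\ref{eq:b3}) is the multiplier congruence $K^{ren}\left(z,w\right)=g\left(z\right)\overline{g\left(w\right)}K\left(z,w\right)$, and dividing each factorizing function $K_{z}^{*}$ by its own mean realizes exactly this congruence at the level of the factorization. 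Every step is linear, so no limiting or measure-theoretic subtlety intervenes.
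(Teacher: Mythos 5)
Your proposal is correct and follows essentially the same route as the paper: substitute the definition (\ref{eq:b4}) into the right-hand side of (\ref{eq:b5}), pull the $x$-independent normalizing scalars outside the integral, and apply the factorization (\ref{eq:b2}) together with the definition (\ref{eq:b3}) of $K^{ren}$. Your added remarks on the finiteness and nonvanishing of $\mathbb{E}\left(K_{z}^{*}\right)$ and the identity $\overline{\mathbb{E}\left(K_{w}^{*}\right)}=\mathbb{E}\left(\overline{K_{w}^{*}}\right)$ are sound bookkeeping that the paper leaves implicit.
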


\begin{proof}
Note that 
\begin{eqnarray*}
\text{RHS}_{\left(\ref{eq:b5}\right)} & \underset{\text{by }\left(\ref{eq:b4}\right)}{=} & \frac{1}{\mathbb{E}\left(K_{z}^{*}\right)\mathbb{E}\left(\overline{K_{w}^{*}}\right)}\int_{B}K_{z}^{*}\left(x\right)\overline{K_{w}^{*}\left(x\right)}d\mu\left(x\right)\\
 & \underset{\text{by }\left(\ref{eq:b2}\right)}{=} & \frac{K\left(z,w\right)}{\mathbb{E}\left(K_{z}^{*}\right)\mathbb{E}\left(\overline{K_{w}^{*}}\right)}=K^{ren}\left(z,w\right)=\text{LHS}_{\left(\ref{eq:b5}\right)}.
\end{eqnarray*}
\end{proof}
\begin{defn}
Let $S$, $B$, $\mu$, $K$, $K^{ren}$, and $K^{*}$ etc. be as
above. Set 
\begin{equation}
K_{z}^{ren}\xmapsto{\quad W_{\mu}\quad}\left(K_{z}^{ren}\right)^{*}\left(x\right)\left(=\frac{K_{z}^{*}\left(x\right)}{\mathbb{E}\left(K_{z}^{*}\right)}\right).\label{eq:b6}
\end{equation}
The assignment (\ref{eq:b6}) extends by limit and closure to an isometry
\begin{equation}
W_{\mu}:\mathscr{H}^{ren}\longrightarrow L^{2}\left(\mu\right),\label{eq:b7}
\end{equation}
so that $I_{\mathscr{H}^{ren}}=W_{\mu}^{*}W_{\mu}$. 
\end{defn}

\begin{rem}
(\ref{eq:b7}) is immediate from (\ref{eq:b4})-(\ref{eq:b5}). In
general, $W_{\mu}$ may not be onto; see below.
\end{rem}

\begin{lem}
The adjoint operator $V_{\mu}:=W_{\mu}^{*}$ of the isometry in (\ref{eq:b7})
is a co-isometry, determined as follows: 
\begin{equation}
\xymatrix{\mathscr{H}^{ren}\ar@/^{1.3pc}/[rr]^{W_{\mu}} &  & L^{2}\left(\mu\right)\ar@/^{1.3pc}/[ll]^{V_{\mu}}}
\label{eq:b7a}
\end{equation}
For $f\in L^{2}\left(\mu\right)$, $z\in S$, we have
\begin{align}
\left(V_{\mu}f\right)\left(z\right) & =\int_{B}f\left(z\right)\overline{\left(\frac{K_{z}^{*}}{\mathbb{E}\left(K_{z}^{*}\right)}\right)}d\mu\left(x\right)\nonumber \\
 & =\frac{1}{\mathbb{E}\left(\overline{K_{z}^{*}}\right)}\int_{B}f\left(x\right)\overline{K_{z}^{*}\left(x\right)}d\mu\left(x\right).\label{eq:b8}
\end{align}
(We call $V_{\mu}$ a normalized transform.)
\end{lem}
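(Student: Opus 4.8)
The plan is to lean entirely on the two structural facts already in hand: that $W_{\mu}$ is an isometry satisfying $W_{\mu}^{*}W_{\mu}=I_{\mathscr{H}^{ren}}$, and that $\mathscr{H}^{ren}=\mathscr{H}(K^{ren})$ is a reproducing kernel Hilbert space. First I would dispose of the co-isometry claim, which is essentially formal: for any isometry the adjoint is automatically a co-isometry. Concretely, since $(W_{\mu}^{*})^{*}=W_{\mu}$, we get $V_{\mu}V_{\mu}^{*}=W_{\mu}^{*}(W_{\mu}^{*})^{*}=W_{\mu}^{*}W_{\mu}=I_{\mathscr{H}^{ren}}$, so $V_{\mu}=W_{\mu}^{*}$ is a co-isometry. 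No computation beyond this identity is needed.

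The substantive part is deriving the pointwise formula (\ref{eq:b8}). Here I would fix $f\in L^{2}\left(\mu\right)$ and note that $V_{\mu}f=W_{\mu}^{*}f\in\mathscr{H}^{ren}$ is a genuine function on $S$, so its value at $z\in S$ is accessible through the reproducing property (\ref{eq:rp1}) of $\mathscr{H}^{ren}$, namely $\left(V_{\mu}f\right)\left(z\right)=\left\langle V_{\mu}f,K_{z}^{ren}\right\rangle _{\mathscr{H}^{ren}}$. Moving $V_{\mu}$ across the inner product as an adjoint and invoking the defining relation (\ref{eq:b6}), $W_{\mu}K_{z}^{ren}=\left(K_{z}^{ren}\right)^{*}=K_{z}^{*}/\mathbb{E}\left(K_{z}^{*}\right)$, yields
\[
\left(V_{\mu}f\right)\left(z\right)=\left\langle f,W_{\mu}K_{z}^{ren}\right\rangle _{L^{2}\left(\mu\right)}=\int_{B}f\left(x\right)\overline{\left(\frac{K_{z}^{*}\left(x\right)}{\mathbb{E}\left(K_{z}^{*}\right)}\right)}\,d\mu\left(x\right),
\]
which is the first line of (\ref{eq:b8}).

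To reach the second line, I would observe that $\mathbb{E}\left(K_{z}^{*}\right)$ is a scalar independent of the integration variable $x$, so its conjugate factors out of the integral, and $\overline{\mathbb{E}\left(K_{z}^{*}\right)}=\mathbb{E}\left(\overline{K_{z}^{*}}\right)$ by linearity of the expectation; this produces the prefactor $1/\mathbb{E}\left(\overline{K_{z}^{*}}\right)$ in front of $\int_{B}f\left(x\right)\overline{K_{z}^{*}\left(x\right)}\,d\mu\left(x\right)$.

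I do not expect a genuine obstacle; the argument is adjoint-and-reproducing-kernel bookkeeping. The only two points deserving a word of care are that $V_{\mu}f$ is indeed a bona fide element of $\mathscr{H}^{ren}$, so that the reproducing property legitimately applies pointwise, and that $\mathbb{E}\left(K_{z}^{*}\right)\neq0$ for every $z\in S$, so that the normalizations in (\ref{eq:b3})--(\ref{eq:b4}) make sense. The former is guaranteed because $V_{\mu}=W_{\mu}^{*}$ maps into $\mathscr{H}^{ren}$ by construction, and the latter is part of the standing hypotheses of the renormalization setup.
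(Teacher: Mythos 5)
Your proposal is correct and follows essentially the same route as the paper: both compute $\left(V_{\mu}f\right)\left(z\right)=\left\langle V_{\mu}f,K_{z}^{ren}\right\rangle _{\mathscr{H}^{ren}}=\left\langle f,W_{\mu}K_{z}^{ren}\right\rangle _{L^{2}\left(\mu\right)}$ via the adjoint relation and the reproducing property, together with $W_{\mu}K_{z}^{ren}=K_{z}^{*}/\mathbb{E}\left(K_{z}^{*}\right)$. Your write-up is in fact slightly more complete, since you make explicit both the formal co-isometry identity $V_{\mu}V_{\mu}^{*}=W_{\mu}^{*}W_{\mu}=I_{\mathscr{H}^{ren}}$ and the conjugation step $\overline{\mathbb{E}\left(K_{z}^{*}\right)}=\mathbb{E}\left(\overline{K_{z}^{*}}\right)$, which the paper leaves implicit.
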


\begin{proof}
Immediate from the definitions. Indeed, for $\forall f\in L^{2}\left(\mu\right)$,
$z\in S$, we have 
\[
\left\langle V_{\mu}f,K_{z}^{ren}\right\rangle _{\mathscr{H}^{ren}}=\int_{B}f\left(x\right)\frac{\overline{K_{z}^{*}\left(x\right)}}{\mathbb{E}\left(\overline{K_{z}^{*}}\right)}d\mu\left(x\right)=\left\langle f,W_{\mu}K_{z}^{ren}\right\rangle _{L^{2}\left(\mu\right)}.
\]
The result follows, $V^{*}=W_{\mu}$, $W_{\mu}^{*}=V_{\mu}$. 
\end{proof}
\begin{cor}
The co-isometry $V_{\mu}:L^{2}\left(\mu\right)\longrightarrow\mathscr{H}^{ren}$
is defined on all of $L^{2}\left(\mu\right)$ if and only if
\begin{equation}
\overline{span}\left\{ K_{z}^{*}\left(\cdot\right)\mathrel{;}z\in S\right\} =L^{2}\left(\mu\right),\label{eq:b9}
\end{equation}
where the LHS of (\ref{eq:b9}) denotes the $L^{2}\left(\mu\right)$-closed
span of $\left\{ K_{z}^{*}\left(\cdot\right)\mathrel{;}z\in S\right\} $. 
\end{cor}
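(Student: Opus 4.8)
The plan is to locate precisely the subspace of $L^{2}(\mu)$ on which the backward transform $V_{\mu}$ is genuinely defined, and to recognize it as the closed span appearing in (\ref{eq:b9}). First I would note that $W_{\mu}$ is an isometry carrying each $K_{z}^{ren}\in\mathscr{H}^{ren}$ to $(K_{z}^{ren})^{*}\in L^{2}(\mu)$ (see (\ref{eq:b6})), and since an isometry has closed range, $\text{range}(W_{\mu})=\overline{span}\{(K_{z}^{ren})^{*}:z\in S\}$. By (\ref{eq:b4}), $(K_{z}^{ren})^{*}=K_{z}^{*}/\mathbb{E}(K_{z}^{*})$, and rescaling each vector by the nonzero scalar $\mathbb{E}(K_{z}^{*})$ does not change a linear span, so $\text{range}(W_{\mu})=\overline{span}\{K_{z}^{*}:z\in S\}$.

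Next I would describe the domain on which $V_{\mu}$ is defined as the inverse transform. On the span of the kernels, $V_{\mu}$ is given explicitly by inverting $W_{\mu}$: using $V_{\mu}=W_{\mu}^{*}$ together with $W_{\mu}^{*}W_{\mu}=I_{\mathscr{H}^{ren}}$ (the isometry relation from the definition preceding (\ref{eq:b7})), one gets $V_{\mu}(K_{z}^{ren})^{*}=K_{z}^{ren}$, so $V_{\mu}$ maps $span\{(K_{z}^{ren})^{*}\}=span\{K_{z}^{*}\}$ onto $span\{K_{z}^{ren}\}$. Being a co-isometry, $V_{\mu}$ is bounded (norm $\le 1$), hence it extends by continuity exactly to the closure $\overline{span}\{K_{z}^{*}\}=\text{range}(W_{\mu})$, and there it coincides with the isometric inverse $W_{\mu}^{-1}$ of $W_{\mu}$.

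Finally I would assemble the equivalence. The inverse transform $V_{\mu}$ has domain all of $L^{2}(\mu)$ precisely when $\overline{span}\{K_{z}^{*}\}=L^{2}(\mu)$: if the domain is all of $L^{2}(\mu)$, then this closed span is all of $L^{2}(\mu)$, which is (\ref{eq:b9}); conversely, if (\ref{eq:b9}) holds, then $span\{K_{z}^{*}\}$ is dense, its closure is all of $L^{2}(\mu)$, and the bounded map extends to the whole space — indeed $W_{\mu}$ is then a surjective isometry, hence unitary, and $V_{\mu}=W_{\mu}^{-1}$ is defined (and unitary) on all of $L^{2}(\mu)$.

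The main point requiring care is disambiguating the phrase ``defined on all of $L^{2}(\mu)$'': as the adjoint $W_{\mu}^{*}$, the co-isometry $V_{\mu}$ is bounded and formally everywhere given by (\ref{eq:b8}), so the statement must be read as the domain of the inverse transform $W_{\mu}^{-1}$ — equivalently, the extent of $\text{range}(W_{\mu})$. Once this reading is fixed, the only substantive computation is the identification $\text{range}(W_{\mu})=\overline{span}\{K_{z}^{*}\}$, for which I must record the standing hypothesis $\mathbb{E}(K_{z}^{*})\ne 0$ (already implicit in forming the renormalization (\ref{eq:b3})), ensuring that the scalar rescaling in the first step leaves the span intact.
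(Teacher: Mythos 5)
Your proof is correct and takes essentially the same route as the paper's: both arguments reduce the statement to the identification $ran\left(W_{\mu}\right)=\overline{span}\left\{ K_{z}^{*}\mathrel{;}z\in S\right\} $ together with the adjoint duality between the isometry $W_{\mu}$ and the co-isometry $V_{\mu}$, the paper phrasing the conclusion as $ker\left(V_{\mu}\right)=ran\left(W_{\mu}\right)^{\perp}=0$ while you phrase it as full domain of the inverse transform $W_{\mu}^{-1}$ --- equivalent readings of the corollary. Your explicit remarks (closed range of an isometry, the harmless rescaling by the nonzero scalars $\mathbb{E}\left(K_{z}^{*}\right)$, and the disambiguation of ``defined on all of $L^{2}\left(\mu\right)$'') merely spell out details the paper's one-line proof leaves implicit.
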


\begin{proof}
Recall that $ran\left(W_{\mu}\right)^{\perp}=ker\left(V_{\mu}\right)$,
so $ker\left(V_{\mu}\right)=0$ $\Longleftrightarrow$ (\ref{eq:b9})
holds. Note that when (\ref{eq:b9}) holds then we get a unitary isomorphism
$V_{\mu}=W_{\mu}^{*}$. See (\ref{eq:b7a}).
\end{proof}
Return to the polydisk in \exaref{pdisk}. 
\begin{lem}
Equation (\ref{eq:b9}) holds in this special case, i.e., 
\begin{gather}
\overline{span}\left\{ K_{z}^{*}\left(\cdot\right)\right\} _{x\in\mathbb{D}^{k}}=L^{2}\left(\mu\right)\nonumber \\
\Updownarrow\nonumber \\
span\left\{ e_{n_{1}}\left(x_{1}\right)e_{n_{2}}\left(x_{2}\right)\cdots e_{n_{k}}\left(x_{k}\right)\mathrel{;}n\in\mathbb{N}_{0}^{k}\right\} \;\text{is dense in }L^{2}\left(I^{k},\mu\right).\label{eq:b11}
\end{gather}
\end{lem}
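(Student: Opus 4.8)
The plan is to reduce the claimed equivalence (\ref{eq:b11}) to a statement about orthogonal complements, using the geometric-series expansion of the product kernel $K_{z}^{*}$. First I would expand each factor of $K_{z}^{*}\left(x\right)=\prod_{j=1}^{k}\left(1-z_{j}\overline{e\left(x_{j}\right)}\right)^{-1}$ as a geometric series and multiply out, obtaining
\[
K_{z}^{*}\left(x\right)=\sum_{n\in\mathbb{N}_{0}^{k}}z^{n}\,\overline{e_{n}\left(x\right)},\qquad e_{n}\left(x\right):=\prod_{j=1}^{k}e\left(n_{j}x_{j}\right),
\]
where for fixed $z\in\mathbb{D}^{k}$ the series converges uniformly in $x\in I^{k}$, since $\sum_{n}\left|z^{n}\right|=\prod_{j}\left(1-\left|z_{j}\right|\right)^{-1}<\infty$ and $\left|e_{n}\left(x\right)\right|=1$.

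Next, for $f\in L^{2}\left(\mu\right)$ I would compute the inner products $\left\langle f,K_{z}^{*}\right\rangle _{L^{2}\left(\mu\right)}$. Since $\mu$ is a probability measure, $f\in L^{1}\left(\mu\right)$, and the uniform (hence dominated) convergence above permits term-by-term integration:
\[
\left\langle f,K_{z}^{*}\right\rangle _{L^{2}\left(\mu\right)}=\int_{B}f\left(x\right)\overline{K_{z}^{*}\left(x\right)}\,d\mu\left(x\right)=\sum_{n\in\mathbb{N}_{0}^{k}}\overline{z}^{n}\int_{B}f\left(x\right)e_{n}\left(x\right)\,d\mu\left(x\right)=\sum_{n}\overline{z}^{n}\,\left\langle f,\overline{e_{n}}\right\rangle _{L^{2}\left(\mu\right)}.
\]
Because $\left|\left\langle f,\overline{e_{n}}\right\rangle \right|\leq\left\Vert f\right\Vert _{L^{2}\left(\mu\right)}$, this is a genuine convergent power series in the variable $w=\overline{z}$ on the polydisk. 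By uniqueness of the Taylor coefficients of a holomorphic function of several variables, $\left\langle f,K_{z}^{*}\right\rangle =0$ for every $z\in\mathbb{D}^{k}$ holds if and only if $\left\langle f,\overline{e_{n}}\right\rangle _{L^{2}\left(\mu\right)}=0$ for every $n\in\mathbb{N}_{0}^{k}$. Passing to orthogonal complements, this says precisely that $\left\{ K_{z}^{*}:z\in\mathbb{D}^{k}\right\} ^{\perp}=\left\{ \overline{e_{n}}:n\in\mathbb{N}_{0}^{k}\right\} ^{\perp}$, and hence $\overline{span}\left\{ K_{z}^{*}\right\} =L^{2}\left(\mu\right)$ iff $\overline{span}\left\{ \overline{e_{n}}:n\in\mathbb{N}_{0}^{k}\right\} =L^{2}\left(\mu\right)$.

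Finally I would bridge the gap between $\overline{e_{n}}$ and the $e_{n}$ appearing in (\ref{eq:b11}). Since $\mu$ is a positive measure, complex conjugation $C\colon f\mapsto\overline{f}$ is a conjugate-linear isometric bijection of $L^{2}\left(\mu\right)$ that carries $e_{n}$ to $\overline{e_{n}}$ and maps dense sets to dense sets; therefore $\overline{span}\left\{ \overline{e_{n}}\right\} =L^{2}\left(\mu\right)$ if and only if $\overline{span}\left\{ e_{n}\right\} =L^{2}\left(\mu\right)$. Chaining this with the previous equivalence yields (\ref{eq:b11}).

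The individual steps are routine; the point that most needs care—and the one it would be easy to overlook—is the conjugation symmetry in the last paragraph, which is exactly what makes the system $\left\{ e_{n}\right\} $ (rather than its conjugates $\left\{ \overline{e_{n}}\right\} $, which are the objects that literally arise from expanding $K_{z}^{*}$) the correct one to name in the statement. The only other places demanding attention are justifying the term-by-term integration and the vanishing-power-series argument in several variables; both are dispatched by the uniform convergence on $I^{k}$ together with the elementary bound $\left|\left\langle f,\overline{e_{n}}\right\rangle \right|\leq\left\Vert f\right\Vert $.
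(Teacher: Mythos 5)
Your proof is correct and follows essentially the same route as the paper: expand $K_{z}^{*}$ as a multiple geometric series, identify the coefficients of the resulting power series $z\mapsto\left\langle f,K_{z}^{*}\right\rangle _{L^{2}\left(\mu\right)}$ (a series in $\overline{z}$) with the Fourier data of $f\,d\mu$, and conclude equality of orthogonal complements via uniqueness of Taylor coefficients. In fact your final step is more careful than the paper's own argument: expanding $K_{z}^{*}$ literally produces the conjugates $\overline{e_{n}}$, so the paper's displayed identity $\left\{ K_{z}^{*}\right\} ^{\perp}=\left\{ e_{n}\right\} ^{\perp}$ holds only up to the conjugate-linear isometry $f\mapsto\overline{f}$, and your last paragraph supplies exactly this missing bridge (which suffices, since conjugation preserves density of spans).
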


\begin{proof}
(\ref{eq:b11}) follows from the orthogonality relation in $L^{2}\left(\mu\right)$,
\begin{equation}
\left\{ K_{z}^{*}\left(\cdot\right)\right\} _{z\in\mathbb{D}^{k}}^{\perp}=\left\{ e_{n}\left(\cdot\right)\right\} _{n\in\mathbb{N}_{0}^{k}}^{\perp}\label{eq:b12}
\end{equation}
where $\perp$ refers to the $L^{2}\left(\mu\right)$-inner product.
In details, we have
\begin{gather*}
f\perp K_{z}^{*},\;\forall z\in\mathbb{D}^{k},\\
\Updownarrow\\
\mathbb{D}^{k}\ni z\longmapsto\left\langle K_{z}^{*},f\right\rangle _{\mu}\equiv0,\\
\Updownarrow\\
\left(\frac{\partial}{\partial z}\right)^{n}\left\langle K_{z}^{*},f\right\rangle _{\mu}=0,\;\forall n\in\mathbb{N}_{0}^{k},\\
\Updownarrow\\
\left\langle e_{n},f\right\rangle _{\mu}=0,\;\forall n\in\mathbb{N}_{0}^{k},
\end{gather*}
which is the desired conclusion. 
\end{proof}
In the case of polydisk, we know that $1/\mathbb{E}\left(K_{z}^{*}\right)$
is a multiplier in $\mathscr{H}\left(K\right)$, and so 
\begin{equation}
\mathscr{H}^{ren}\hookrightarrow\mathscr{H}\label{eq:b14}
\end{equation}
where (\ref{eq:b14}) means containment of RKHSs, i.e.,

$\mathscr{H}^{ren}=\mathscr{H}\left(K^{ren}\right)=$ the RKHS of
$K^{ren}$; see (\ref{eq:b3})-(\ref{eq:b5}). 

$\mathscr{H}=\mathscr{H}\left(K\right)=$ the RKHS of the Szeg\H{o}
kernel (\ref{eq:b1}). 
\begin{example}
For $k=1$, $1/\mathbb{E}\left(K_{z}^{*}\right)=1-b\left(z\right)$,
where $b$ is the function corresponding to $\mu$ via Herglotz 
\begin{equation}
\Re\left\{ \frac{1+b\left(z\right)}{1-b\left(z\right)}\right\} =P_{z}\left[\mu\right]=\int_{0}^{1}\frac{1-\left|z\right|^{2}}{\left|e\left(x\right)-z\right|^{2}}d\mu\left(x\right)\label{eq:b15}
\end{equation}
i.e., $P_{z}\left[\mu\right]$ is the Poisson-kernel integral. 
\end{example}

\begin{rem}
In the \emph{general case }discussed above, we may not have that $S\ni z\longmapsto1/\mathbb{E}\left(K_{z}^{*}\right)$
is a multiplier in $\mathscr{H}\left(K\right)$. (See (\ref{eq:b2})-(\ref{eq:b3})
for the definitions.) It may not even be so for all kernels on $\mathbb{D}^{k}$. 
\end{rem}

\begin{acknowledgement*}
The co-authors thank the following colleagues for helpful and enlightening
discussions: Professors Sergii Bezuglyi, Ilwoo Cho, Paul Muhly, Myung-Sin
Song, Wayne Polyzou, and members in the Math Physics seminar at The
University of Iowa.
\end{acknowledgement*}
\bibliographystyle{amsalpha}
\bibliography{ref}

\end{document}